\newtheorem{lemma}{Lemma}[section]
\newtheorem{prop}[lemma]{Proposition}
\newtheorem{cor}[lemma]{Corollary}
\newtheorem{claim*}{Claim}
\newtheorem{thm}[lemma]{Theorem}
\newtheorem{question}[lemma]{Question}
\theoremstyle{definition}
\newtheorem{example}[lemma]{Example}
\newtheorem{construction}[lemma]{Construction}
\theoremstyle{remark}
\newtheorem{remark}[lemma]{Remark}
\newtheorem{remarks}[lemma]{Remarks}
\numberwithin{equation}{section}
\newcommand{\cC}{\mathcal{C}}
\newcommand{\m}{\mathfrak m}
\newcommand{\PP}{\mathbb P}
\newcommand{\HH}{H}
\newcommand{\ZZ}{\mathbb Z}
\newcommand{\QQ}{\mathbb Q}
\newcommand{\im}{\operatorname{im}}
\newcommand{\id}{\operatorname{id}}
\newcommand{\Tor}{\operatorname{Tor}}
\newcommand{\Hom}{\operatorname{Hom}} 
\newcommand{\rank}{\operatorname{rank}}
\newcommand{\defi}[1]{\textsf{#1}} 
\newcommand{\beq}{\begin{displaymath}}
\newcommand{\eeq}{\end{displaymath}}
\def\reg{\operatorname{reg}}
\def\nc{\newcommand}
\def\on{\operatorname}
\nc{\Q}{\mathbb{Q}}
\nc{\RR}{\mathbf{R}}
\nc{\LL}{\mathbf{L}}
\nc{\xra}{\xrightarrow}
\nc{\xla}{\xleftarrow}
\def\a{\alpha}
\def\DM{\operatorname{DM}}
\nc{\into}{\hookrightarrow}
\nc{\onto}{\twoheadrightarrow}
\nc{\OO}{\mathcal{O}}
\nc{\Z}{\mathbb{Z}}
\nc{\cA}{\mathcal{A}}
\nc{\w}{\widehat}
\nc{\End}{\on{End}}
\nc{\res}{\frac{1}{x_0x_1}}
\nc{\tF}{\widetilde{F}}
\nc{\tG}{\widetilde{G}}
\nc{\tf}{\widetilde{f}}
\nc{\Com}{\on{Com}}
\nc{\G}{\mathbb{G}}
\nc{\cG}{\mathcal{G}}
\nc{\cE}{\mathcal E}
\nc{\cF}{\mathcal F}
\nc{\cR}{\mathcal R}
\nc{\cD}{\mathcal D}
\nc{\cB}{\mathcal B}
\nc{\cT}{\mathcal T}
\nc{\cL}{\mathcal L}
\nc{\bM}{\mathbf M}
\nc{\bN}{\mathbf N}
\nc{\U}{\mathbf U}
\nc{\BM}{\mathbf B \mathbf M}
\nc{\Dsg}{\on{D}_{\on{sg}}}
\nc{\fC}{\mathcal{C}}
\nc{\fG}{\mathcal{G}}
\nc{\N}{\mathbb{N}}
\nc{\del}{\partial}
\nc{\cone}{\on{cone}}
\nc{\D}{\on{D}_{\on{diff}}}
\nc{\DMb}{\on{D}^b_{\DM}}
\nc{\Db}{\on{D}^{\on{b}}}
\nc{\Kb}{\on{K}^{\on{b}}}
\nc{\fm}{\mathfrak{m}}
\nc{\Flag}{\on{Flag}}
\nc{\DMmin}{\DM_{\on{min}}}
\nc{\Ddiff}{\on{D}_{\on{diff}}}
\nc{\Dbdiff}{\on{D}^\on{b}_{\on{diff}}}
\nc{\wO}{\widehat{\OO}}
\nc{\wT}{\widehat{T}}
\nc{\from}{\leftarrow}
\nc{\wLL}{\widetilde{\LL}}
\nc{\augCech}{\widetilde{\cC}}
\nc{\Fold}{\on{Fold}}
\nc{\Ext}{\on{Ext}}
\nc{\FF}{\mathbf{F}}
\nc{\Comper}{\Com_{\on{per}}}
\nc{\Unfold}{\on{Unfold}}
\nc{\intHom}{\underline{\Hom}}
\nc{\Ex}{\on{Ex}}
\nc{\tg}{\widetilde{g}}
\def\b{\beta}
\nc{\B}{\mathcal{B}}
\nc{\K}{\mathcal{K}}
\nc{\kos}{\on{Kos}}
\nc{\Perf}{\on{Perf}}
\nc{\tR}{\widetilde{\cR}}
\nc{\X}{\mathcal{X}}
\nc{\Cl}{\on{Cl}}
\nc{\fU}{\mathcal{U}}
\nc{\tA}{\widetilde{A}}
\nc{\tB}{\widetilde{B}}
\nc{\tC}{\widetilde{C}}
\title{Minimal free resolutions of differential modules}
\author{Michael K. Brown}
\author{Daniel Erman}
\date{\today}
\begin{document}

\maketitle
\begin{abstract}
We propose a notion of minimal free resolutions for differential modules, and we prove existence and uniqueness results for such resolutions. We also take the first steps toward studying the structure of minimal free resolutions of differential modules. Our main result in this direction explains a sense in which the minimal free resolution of a differential module is a deformation of the minimal free resolution of its homology; this leads to structural results that mirror classical theorems about minimal free resolutions of modules.

\end{abstract}

\section{Introduction}
A \defi{differential module} is a module $D$ equipped with an endomorphism $\del\colon D\to D$ that squares to 0. 
Differential modules generalize chain complexes and appear in the literature as far back as Cartan-Eilenberg's text \cite{CE}.  The study of differential modules was truly launched by Avramov-Buchweitz-Iyengar's seminal work \cite{ABI}, and they have played an important role in commutative algebra and representation theory in recent years, for instance in articles by Iyengar-Walker \cite{IW}, Ringel-Zhang \cite{RZ}, and Rouquier \cite{rouquier}, among others, e.g. \cite{devries, stai2, stai, su,th,wei,xyy}. 

The central theme of Avramov-Buchweitz-Iyengar's article \cite{ABI} is that theorems about free complexes ought to have analogues for what they call free differential flags, as these differential modules come equipped with a filtration that can act as a substitute for a homological grading.  With this in mind, we ask whether one can find an analogue of minimal free resolutions for differential modules.

\subsection{Motivation}
\label{motivation}
A main source of our motivation for this project comes from our work on sheaves over toric varieties in \cite{Tate}. In more detail: the main goal of \cite{Tate} is to extend Eisenbud-Fl\o ystad-Schreyer's theory of Tate resolutions of sheaves on $\PP^n$ \cite{EFS} to more general toric varieties. The results in \cite{Tate} require a multigraded version of the BGG correspondence, which gives an adjunction between complexes of modules over the Cox ring of a projective toric variety and differential modules over its Koszul dual exterior algebra $E$ (see \cite{HHW} for a proof of the multigraded BGG correspondence, or see \cite{Tate} for an overview). Just as the theory of Tate resolutions in \cite{EFS} involves minimal free resolutions of $E$-modules, our toric generalization involves a theory of minimal free resolutions for differential $E$-modules. 

Another source of motivation comes from commutative algebra. A conjecture of Avramov-Buchweitz-Iyengar \cite[Conjecture 5.3]{ABI} predicts that, over a local ring of Krull dimension $d$, there is a lower bound on the rank of certain free differential modules with nonzero finite length homology that depends on $d$. This conjecture generalizes a long-standing conjecture of Carlsson in algebraic topology \cite[Conjecture I.3]{carlsson} and a folklore conjecture of Avramov about free complexes with nonzero finite length homology (see the introduction of~\cite{IW}); it is also closely related to conjectures of Buchsbaum--Eisenbud~\cite[p. 453]{buchs-eis-gor} and Horrocks~\cite[Problem 24]{hartshorne-vector} on the ranks of free modules in a minimal free resolution of a finite length module. While the Avramov-Buchweitz-Iyengar conjecture was recently proven by Iyengar-Walker to be false \cite{IW}, the conjecture and its consequences nevertheless highlight the importance of understanding homological properties of free differential modules. We hope that a notion of minimal free resolutions of differential modules will provide a new and fruitful perspective on this topic; see the end of Section \ref{results} for more details.

\subsection{Results}
\label{results}
Let $R$ be graded local ring; that is, assume $R = \bigoplus_{i = 0}^\infty R_i$ is a Noetherian $\N$-graded ring with $R_0$ local. How should one define a minimal free resolution of a differential module $D$?  A first guess might be to define it as a quasi-isomorphism $F \xra{\simeq} D$,
where $F$ is free, and the differential $\del_F$ on $F$ is \defi{minimal}, i.e. $\del_F(F) \subseteq \m F$. The problem with this definition is that such resolutions are not unique up to isomorphism.  For instance, let $R = \QQ[x]/(x^2)$, and take $D=0$. For any $n > 0$, we can take $F=R^n$, with differential given by $\del_F = x\cdot \id_F$; the map $(R^n,\del_F) \to (0,0)$ is a quasi-isomorphism for all $n$.

The problem here is already present in the study of free resolutions of complexes of modules: a pair of minimal free resolutions $F, F'$ of a chain complex $C$ are only guaranteed to be isomorphic if they are bounded below, i.e. $F_i = 0 = F_i'$ for $i \ll 0$ \cite[Proposition 4.4.1]{roberts}. The above example may be reinterpreted as a collection of non-isomorphic minimal free complexes 
$$
\cdots \xra{x} R^n \xra{x} R^n \xra{x} \cdots,
$$
each of which is a free resolution of the 0 complex.

We are then led to ask: what is the right notion of ``bounded below'' in the setting of differential modules? Avramov-Buchweitz-Iyengar's work points toward the answer: a \defi{free flag} is a differential module of the form $F = \bigoplus_{i \ge 0} F_i$, where each $F_i$ is free, and $\del_F(F_i) \subseteq \bigoplus_{j < i} F_j$ (\cite{ABI} Section 2).\footnote{While \cite{ABI} do not equip their flags with splittings $F=\bigoplus_{i\ge 0} F_i$ of the filtration, we include the choice of a splitting to streamline presentations of differentials as matrices.} We define a \defi{free flag resolution} of a differential $R$-module $D$ to be a quasi-isomorphism $F \xra{\simeq} D$, where $F$ is a free flag. 

It follows from the proof of \cite[Lemma 3.5]{stai} that every differential $R$-module admits a free flag resolution; we discuss two ways to construct free flag resolutions in Section \ref{flagressection}.
We can represent the differential of a free flag $(F,\del_F)$ as a matrix of the form
$$
\bordermatrix{
	&F_0&F_{1}&F_{2}&F_{3}&\cdots &F_n &\cdots \cr
F_{0}&0&\partial_{1,0}&\partial_{2,0}&\partial_{3,0}&\cdots &\partial_{n,0}&\cdots \cr
F_{1}&0&0&\partial_{2,1}&\partial_{2,0}&\cdots &\partial_{n,1}&\cdots \cr
\ \ \vdots &\vdots&\vdots&\vdots&\vdots&\ddots &\vdots&\ddots \cr
F_{n-1}&0&0&0&0&\cdots &\del_{n,n-1}&\cdots \cr
F_n&0&0&0&0&\cdots &0&\cdots \cr	
\ \ \vdots &\vdots&\vdots&\vdots&\vdots&\ddots &\vdots&\ddots \cr
},
$$
where each $\del_{i,j}$ is a map of free modules. Notice that, if $F$ is a free flag such that $\del(F_i) \subseteq F_{i - 1}$ for all $i \ge 1$, then $F$ can be viewed as a bounded below complex of free modules. 

Since flags provide an analogue of bounded below complexes, a second guess might be that a minimal free resolution of a differential $R$-module $D$ should be a free flag resolution 
$
F \xra{\simeq} D
$
such that $\del_F$ is minimal. Unfortunately, this definition has the opposite problem: such resolutions don't always exist.

\begin{example}
\label{notflag}
Let $R = \QQ[x, y]$, with the standard grading.  Take $D = R^{\oplus 2}$, with differential
\begin{equation}
\label{differential}
\del_D = \begin{pmatrix}
xy & -x^2 \\
y^2 & -xy\\
\end{pmatrix}.
\end{equation}
While $D$ is free and minimal, it is observed in \cite{ABI} that it does not admit the structure of a free flag (one can check this directly or see this from \cite[Rank Inequality]{ABI}, as in \cite[Example 5.6]{ABI}). It follows that $D$ is not a free flag resolution of itself.  In fact, we will show in Example \ref{DNE} that $D$ does not admit any free flag resolution whose differential is minimal.
\end{example}

However, it turns out that the differential module $D$ in Example \ref{notflag} is a \defi{summand} of the (non-minimal) free flag resolution $F = R^{\oplus 4}$ of $D$ with differential
$$
\del_F = \begin{pmatrix} 0 & -y & -x & -1 \\ 0 & 0 & 0 & -x \\ 0 & 0 & 0 & y \\ 0 & 0 & 0 & 0 \end{pmatrix};
$$
see Example~\ref{DNE} for details. In particular, summands of free flags need not be free flags. This finally leads to our definition:
a \defi{free resolution} of a differential module $D$ is a quasi-isomorphism
$$
\epsilon\colon F \xra{\simeq} D
$$
such that $F$ is a free differential module, and $\epsilon$ factors as $F \to \tF \to D$, where $\tF \to D$ is a free flag resolution, and $F \to \tF$ is a split injection of differential modules. We say $\epsilon\colon F \xra{\simeq} D$ is a \defi{minimal free resolution} if $\del_F$ is minimal. 

For instance, the differential module $D$ in Example \ref{notflag} is a minimal free resolution of itself. For an example that is minimal and free but is not a minimal free resolution of itself, consider again the examples of $R = \QQ[x]/(x^2)$, $F = R^n$ for $n > 0$, and $\del_F = x \cdot \id_F$. Since each such $F$ is acyclic but not contractible, it cannot be a summand of a flag by \cite[Theorem 3.5]{ABI}.

To motivate our definition of a free resolution, we observe that an aspect of free flags that makes them suitable to play the role of free resolutions is a certain lifting property: given a pair of free flag resolutions $F \to D$ and $F' \to D'$ and a morphism $f \colon D \to D'$ of differential modules, there is a morphism $\tf\colon F \to F'$ that makes the square
$$
\xymatrix{
F \ar[r] \ar[d]^-{\tf} & D \ar[d]^-{f} \\
F' \ar[r] & D'
}
$$
commute up to homotopy, and the lift $\tf$ is unique up to homotopy (see Lemma~\ref{prop:lifting}). But \emph{summands} of free flag resolutions also have this lifting property, so it is 
reasonable to also use them to play the role of free resolutions. This subtlety is not present in the study of free resolutions of complexes, because while a summand of a free flag need not be a free flag, a summand of a bounded below complex of free modules is again a bounded below complex of free modules (recall that $R$ is graded local, so all projectives are free).

We prove the following existence and uniqueness results for minimal free resolutions of differential modules; see Theorem~\ref{technical} below for a slightly more general statement. We note that, throughout the paper, we denote the homology of a differential module $D$ by $H(D)$.

\begin{thm}
\label{thm:exist}
Let $D$ be a differential $R$-module such that $H(D)$ is finitely generated.
\begin{itemize}
\item[(a)] If $H(D)$ has finite projective dimension, then $D$ admits a minimal free resolution $M\to D$ that is unique up to isomorphism of differential modules and such that $M$ has finite rank as an $R$-module.
\item [(b)] If $R_0$ is a field, and the differential on $D$ is homogeneous of degree 0, then $D$ admits a minimal free resolution $M\to D$ that is unique up to isomorphism of differential modules. Any basis for $M$ as an $R$-module has finitely many elements of degree $j$ for any $j\in \ZZ$ and no degree $j$ elements for $j\ll 0$.
\end{itemize}
\end{thm}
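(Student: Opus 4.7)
The plan is to establish existence by starting from an arbitrary free flag resolution and performing a ``minimization'' procedure that iteratively splits off trivially acyclic summands, and to establish uniqueness via the lifting property combined with a Nakayama-style argument.

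For existence, I would begin with a free flag resolution $\tilde{F} \xra{\simeq} D$, whose existence is guaranteed by \cite[Lemma 3.5]{stai}. Relative to the flag decomposition $\tilde F = \bigoplus_i \tilde F_i$, the differential $\partial_{\tilde F}$ is strictly block upper-triangular. Whenever $\partial_{\tilde F}$ has a matrix entry lying in $R \setminus \m$, a suitable change of basis in the source and target of that block splits off a rank-$2$ contractible summand, namely two basis vectors related by the identity map. The complementary summand $M$ remains a free differential module quasi-isomorphic to $D$, and although $M$ itself need not be a free flag, it is a summand of one, hence qualifies as a free resolution in the sense of our definition. Iterating this cancellation procedure and taking a limit produces a minimal free resolution $M \to D$.

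The main obstacle in (a) is ensuring that the iteration terminates with an $M$ of finite rank. The natural approach is to choose $\tilde F$ carefully, rather than use an arbitrary flag. Specifically, I would start with a minimal free resolution $P_\bullet \xra{\simeq} H(D)$ of ordinary modules; since $H(D)$ is finitely generated with $\pdim_R H(D) < \infty$, the total rank $\sum_i \rank(P_i)$ is finite. An inductive lifting argument (adding ``higher'' flag components one step at a time to correct for the failure of $\partial_{P_\bullet}$ to lift to a quasi-isomorphism to $D$) then produces a free flag resolution of finite rank, after which the minimization procedure terminates in finitely many cancellations. For (b), the same procedure runs degree by degree: since $R_0$ is a field and $\partial_D$ is homogeneous of degree $0$, the flag resolution can be chosen so that each graded piece has finitely many generators and is trivial in sufficiently negative degrees, so each graded component is reached by a finite cancellation sequence.

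For uniqueness, suppose $\epsilon \colon M \xra{\simeq} D$ and $\epsilon' \colon M' \xra{\simeq} D$ are two minimal free resolutions. Extending $\epsilon$ and $\epsilon'$ along the split inclusions into their ambient flag resolutions and applying Lemma \ref{prop:lifting}, I would obtain morphisms $f \colon M \to M'$ and $g \colon M' \to M$ lifting $\id_D$, together with homotopies realizing $gf \simeq \id_M$ and $fg \simeq \id_{M'}$. Minimality forces every homotopy correction term to lie in $\m M$, so $gf \equiv \id_M \pmod{\m M}$, and Nakayama's lemma then implies that $gf$ is an automorphism; symmetrically for $fg$. Hence $f$ is an isomorphism of differential modules. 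In (b), the graded analogue of this argument, combined with the degree-wise control on the flag resolution constructed above, yields the stated finiteness properties of any basis of $M$. The subtlest point in the whole argument is the extension of Lemma \ref{prop:lifting} to summands of flags, so that $f$ and $g$ can genuinely be constructed on $M$ and $M'$ rather than merely on their ambient flags; I expect this to be handled by choosing the lifts on the ambient flags to respect the direct-summand decompositions up to homotopy.
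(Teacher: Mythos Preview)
Your proposal follows essentially the same route as the paper: minimize a free flag resolution by iteratively splitting off contractible rank-$2$ summands (this is exactly Proposition~\ref{prop:minimization procedure}), and prove uniqueness via the lifting property together with Nakayama. A few differences are worth noting. For part~(a), the paper obtains the finite-rank flag resolution as a direct consequence of Theorem~\ref{thm:deformation a}: starting from a minimal free resolution $F_\bullet$ of $H(D)$, one produces a flag whose underlying module is exactly $\bigoplus_i F_i$, so the ``inductive lifting'' you allude to is already packaged there. For part~(b), the paper does not run the cancellation degree by degree and pass to a limit; instead it applies Zorn's Lemma to the poset of split contractible summands of $F$, using the finite-dimensionality of each graded piece to show that a chain $C_1 \subseteq C_2 \subseteq \cdots$ has an upper bound (the point being that the natural map $F \to \prod_i T_i$ factors through $\bigoplus_i T_i$). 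Your direct-limit approach is reasonable here too, but Remark~\ref{difficulty} flags precisely this step as the delicate one: without the degree-wise finiteness one can still build a minimal quasi-isomorphic submodule $M \hookrightarrow F$, but verifying that $M$ is a \emph{summand} of $F$ is the crux. Finally, your closing worry about extending Lemma~\ref{prop:lifting} to summands of flags is unnecessary: the lemma is already stated for $F$ a summand of a free flag (retracts inherit the lifting property), so $f$ and $g$ can be constructed on $M$ and $M'$ directly, without passing through the ambient flags.
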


\begin{remarks}\label{remarks}
\text{ }
\begin{enumerate}

\item The hypotheses in Theorem \ref{thm:exist} provide finiteness conditions that are needed to apply Nakayama's Lemma to $M$.  See also Remark~\ref{difficulty}.  

\item If $D$ is finitely generated with zero differential, then the minimal free resolution of $D$ is just the usual minimal free resolution of the underlying module, considered as a differential module.

\item Passing from a free flag resolution to a minimal free resolution provides uniqueness at the potential cost of the flag structure. In fact, minimal free resolutions have genuinely distinct behavior from free flags.  For instance, the minimal free resolution $D$ in Example~\ref{notflag} does not satisfy~\cite[Rank Inequalities]{ABI}. 
However, Theorem~\ref{thm:deformation} gives a precise description of the relationship between the minimal free resolution of $D$ and the minimal free resolution of its homology, in the spirit of the above theorem of Avramov-Buchweitz-Iyengar.

\item One can define the Betti numbers $\beta_j^{\DM}(D)$ of a differential module $D$ in terms of Tor, just as in classical homological algebra (see Section \ref{betti}). If $M$ is a minimal free resolution of $D$, then $\beta_j^{\DM}(D)$ is the number of degree $j$ elements in any basis of $M$. 

\end{enumerate}
\end{remarks}

Our second main result shows that the minimal free resolution of a differential module is closely related to--and is, in fact, a deformation of--the minimal free resolution of its homology, providing the beginning of a structure theory for such resolutions. Here we state the result for degree $0$ differential modules; see Theorem~\ref{thm:deformation a} for the more general statement.

\begin{thm}\label{thm:deformation}
Let $D$ be a differential $R$-module whose differential is homogeneous of degree $0$ and such that $\HH(D)$ is finitely generated. 
Let
\[
F_0 \overset{\del_{1,0}}{\longleftarrow}F_1 \overset{\del_{2,1}}{\longleftarrow}F_2 \overset{\del_{3,2}}{\longleftarrow}\cdots
\]
be a minimal free resolution of $\HH(D)$.  There exists a free flag resolution of $D$ such that the underlying module is $\bigoplus_{i} F_i$, and the differential has the form:
\[
\del=\bordermatrix{
	&F_0&F_{1}&F_{2}&F_{3}&\cdots &F_n &\cdots \cr
F_{0}&0&\del_{1,0}&\del_{2,0}&\del_{3,0}&\cdots &\del_{n,0}&\cdots \cr
F_{1}&0&0&\del_{2,1}&\del_{3,1}&\cdots &\del_{n,1}&\cdots \cr
\ \ \vdots &\vdots&\vdots&\vdots&\vdots&\ddots &\vdots&\ddots \cr
F_{n-1}&0&0&0&0&\cdots &\del_{n,n-1}&\cdots \cr
F_n&0&0&0&0&\cdots &0&\cdots \cr	
\ \ \vdots &\vdots&\vdots&\vdots&\vdots&\ddots &\vdots&\ddots \cr
}.
\]
\end{thm}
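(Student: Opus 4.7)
We construct the flag differential $\partial$ on $F := \bigoplus_{i \geq 0} F_i$ and a quasi-isomorphism $\epsilon \colon F \to D$ by simultaneous induction on the column index $n$. At stage $n$, the entry $\partial_{n, n-1}$ is prescribed by the given minimal resolution of $\HH(D)$, and we inductively define the remaining entries $\partial_{n, j}$ for $j = n-2, n-3, \ldots, 0$ and a component $\epsilon_n \colon F_n \to D$, maintaining the invariants that $\partial^2$ vanishes on $\bigoplus_{i \leq n} F_i$ and that $\partial_D \circ \epsilon_i = \sum_{j < i} \epsilon_j \circ \partial_{i, j}$ for all $i \leq n$. The base case $n = 0$ simply lifts the augmentation $F_0 \onto \HH(D)$ to a map $\epsilon_0 \colon F_0 \to Z(D) \subseteq D$, using projectivity of $F_0$ and the surjection of cycles onto homology.

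In the inductive step, the entries $\partial_{n, j}$ for $j = n-2, \ldots, 1$ are constructed in decreasing order of $j$. The condition that the $F_{j-1}$-component of $\partial^2|_{F_n}$ vanish takes the form
\[
\partial_{j, j-1} \circ \partial_{n, j} = -\sum_{k=j+1}^{n-1} \partial_{k, j-1} \circ \partial_{n, k},
\]
whose right-hand side, by a diagram chase using the inductive hypotheses together with the already-constructed entries $\partial_{n, j'}$ for $j' > j$, is annihilated by $\partial_{j-1, j-2}$. Exactness of the minimal resolution at $F_{j-1}$ and freeness of $F_n$ then yield a lift $\partial_{n, j}$. The remaining entry $\partial_{n, 0}$ and the map $\epsilon_n$ are chosen together: one checks that the partial sum $\Psi := \sum_{j=1}^{n-1} \epsilon_j \circ \partial_{n, j}$ is a cycle in $D$ (yet another diagram chase, using the components of $\partial^2 = 0$ just established at stage $n$), so it has a well-defined class $[\Psi] \in \Hom(F_n, \HH(D))$. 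We pick $\partial_{n, 0} \colon F_n \to F_0$ so that the composition $F_n \xra{\partial_{n, 0}} F_0 \onto \HH(D)$ represents $-[\Psi]$, which is possible because $F_n$ is projective and the augmentation is surjective. This forces $\Psi + \epsilon_0 \circ \partial_{n, 0}$ to be a boundary in $D$, and we take $\epsilon_n$ to be any preimage under $\partial_D$.

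The main obstacle is the careful bookkeeping in these diagram chases: one must track which pieces of $\partial^2 = 0$ and of the chain-map condition are invoked at each substep, and in what order, to conclude that each obstruction lies in the image of the relevant boundary map. With the construction in place, it remains to verify that $\epsilon \colon F \to D$ is a quasi-isomorphism. Surjectivity on homology is immediate: every class of $\HH(D)$ has the form $[\epsilon_0(f_0)]$ with $f_0 \in F_0 \subseteq F$ a cycle. For injectivity, we show that any cycle $f = \sum_i f_i \in F$ with $[\epsilon(f)] = 0$ is a boundary by induction on $N(f) := \max\{i : f_i \neq 0\}$. When $N \geq 1$, the $F_{N-1}$-component of $\partial_F(f) = 0$ yields $\partial_{N, N-1}(f_N) = 0$, so by exactness we may write $f_N = \partial_{N+1, N}(g)$, and then $f - \partial_F(g)$ is a cycle with strictly smaller top index and vanishing class under $\epsilon$. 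The base case $N = 0$ uses $[\epsilon_0](f_0) = 0$ together with exactness of the augmented resolution at $F_0$.
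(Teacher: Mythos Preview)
Your proof is correct and takes a genuinely different route from the paper's. The paper starts from the Cartan--Eilenberg--type free flag resolution $G^B \oplus G^H \oplus G^{B}(a)$ of Construction~\ref{const:stai}, built via two applications of the Horseshoe Lemma, and then conjugates the differential by an explicit invertible matrix to split off a contractible summand, leaving $(G^H,\,\del^H - \beta\tau^{-1}\alpha)$ with $\tau = \id + \gamma$; expanding $\tau^{-1}$ as a geometric series gives a closed formula $\del_{i,j} = (-1)^{i-j-1}\beta_{j+1}\gamma_{j+2}\cdots\gamma_{i-1}\alpha_i$ for the off-diagonal entries. You instead build the flag differential directly, column by column, in the spirit of the killing-cycles Construction~\ref{killing cycles}, but with the pieces $F_i$ prescribed in advance by the minimal resolution of $H(D)$; each obstruction is killed using exactness of that resolution and projectivity of $F_n$. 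Your argument is more elementary---no Horseshoe Lemma, no change-of-basis tricks---while the paper's buys an explicit description of $\del_{i,j}$ in terms of the Horseshoe maps $\alpha,\beta,\gamma$, which is precisely what feeds into Remark~\ref{rmk:deformation cor} on when the resulting flag resolution is already minimal.

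One small point worth making explicit in your write-up: at the substep $j = 1$ the phrase ``annihilated by $\del_{j-1,j-2}$'' must be read as ``annihilated by the augmentation $F_0 \to H(D)$'', and verifying this uses the chain-map relations $\del_D\epsilon_k = \sum_\ell \epsilon_\ell\del_{k,\ell}$ for $k < n$ from your inductive hypothesis, not only the already-established components of $\del^2 = 0$; the check goes through, but the mechanism is different from the case $j \ge 2$.
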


The $\del_{i,j}$ with $i-j>1$ measure, in a sense, the extent to which $D$ fails to be quasi-isomorphic to its homology.
In fact, one can use this result to produce an explicit degeneration (see Remark~\ref{rmk:degeneration}) from a resolution of the differential module $D$ to a resolution of the ordinary module $\HH(D)$.
This allows one to transfer structural results on minimal free resolutions from the category of modules to that of differential modules.  For instance, we obtain an analogue of the Hilbert-Burch theorem for differential modules:
\begin{cor}\label{cor:HB}
Let $R=k[x_1,\dots,x_n]$, and let $D$ be a differential $R$-module whose differential is homogeneous of degree 0. Assume that $\HH(D)$ is a Cohen-Macaulay, codimension $2$ quotient of $R$.  The minimal free resolution of $D$ is of the form
\[
\bordermatrix{
&R&F_1&F_2\cr
R&0&\del_{1,0}&\del_{2,0}\cr
F_1&0&0&\del_{2,1}\cr
F_2&0&0&0
},
\]
where, setting $r = \rank F_2$, we have $\rank F_1 = r+1$, and the entries of $\del_{1,0}$ are the $r\times r$ minors of $\del_{2,1}$.\end{cor}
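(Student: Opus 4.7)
The plan is to combine Theorem~\ref{thm:deformation} with the classical Hilbert-Burch theorem applied to the minimal free resolution of $\HH(D)$ as an $R$-module.

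Since $R = k[x_1,\ldots,x_n]$ is a polynomial ring and $\HH(D)$ is Cohen-Macaulay of codimension $2$, the Auslander-Buchsbaum formula gives $\pdim_R \HH(D) = 2$, so the minimal free resolution of $\HH(D)$ has the form $0 \to F_2 \xra{\del_{2,1}} F_1 \xra{\del_{1,0}} R \to \HH(D) \to 0$ with $F_0 = R$ and $F_i = 0$ for $i \ge 3$. Feeding this into Theorem~\ref{thm:deformation} produces a free flag resolution of $D$ whose underlying module is $R \oplus F_1 \oplus F_2$ and whose differential has precisely the $3 \times 3$ block upper triangular form displayed in the statement of the corollary.

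Next I would check that this free flag resolution is already minimal. The entries of $\del_{1,0}$ and $\del_{2,1}$ lie in $\fm$ because $F_\bullet$ is a minimal resolution of $\HH(D)$. For the remaining block $\del_{2,0}\colon F_2 \to R$, observe that the generators of $F_1$ lie in positive degrees (they map to minimal generators of the proper ideal $\ker(R \to \HH(D)) \subseteq \fm$), and minimality of $\del_{2,1}$ then places the generators of $F_2$ in degrees $\ge 2$. Since $\del_D$ is homogeneous of degree $0$, every entry of $\del_{2,0}$ is a homogeneous element of $R$ of degree $\ge 2$ and hence lies in $\fm$. Thus the constructed free flag resolution is a minimal free resolution in the sense of the paper (a free flag is tautologically a summand of itself, hence qualifies as a free resolution). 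Because $\HH(D)$ has finite projective dimension, Theorem~\ref{thm:exist}(a) guarantees that the minimal free resolution of $D$ is unique up to isomorphism, so this flag recovers it.

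Finally, the statements about ranks and minors are precisely the conclusions of the classical Hilbert-Burch theorem for $0 \to F_2 \xra{\del_{2,1}} F_1 \xra{\del_{1,0}} R \to \HH(D) \to 0$: one has $\rank F_1 = \rank F_2 + 1$, and (up to a common unit) the entries of $\del_{1,0}$ are the maximal minors of $\del_{2,1}$. These transfer verbatim. The only step requiring genuine care is the verification that $\del_{2,0} \in \fm$, and this is where the hypothesis that $\del_D$ is homogeneous of degree $0$ is essential; everything else is bookkeeping on top of Theorem~\ref{thm:deformation} and classical Hilbert-Burch.
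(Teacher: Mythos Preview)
Your proposal is correct and follows essentially the same approach as the paper: apply Theorem~\ref{thm:deformation} together with the classical Hilbert--Burch theorem to obtain the displayed flag resolution, then use a degree argument to show $\del_{2,0}$ is minimal (the paper simply notes that $F_2$ has no degree~$0$ generators, while you sharpen this to degree~$\ge 2$), whence the flag is already the minimal free resolution.
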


As noted earlier, Avramov-Buchweitz-Iyengar conjectured in \cite{ABI} that, if $R$ is a local ring, and $F$ is a free flag differential module with nonzero finite length homology, then $\on{rank} F \ge 2^{\dim R}$. This was largely motivated by similar conjectures of 
Buchsbaum-Eisenbud ~\cite[p. 453]{buchs-eis-gor} and Horrocks~\cite[Problem 24]{hartshorne-vector} about the minimal rank of a free resolution of a nonzero finite length module. Walker recently proved a variant of these conjectures for minimal free resolutions \cite{W}, while Iyengar-Walker provided counterexamples to Avramov-Buchweitz-Iyengar's conjecture for flag differential modules \cite{IW}.
Theorem~\ref{thm:deformation} offers a new perspective on the tension between these recent results, providing a comparison between the minimal free resolution of $D$ and that of $\HH(D)$, and showing that the rank of the minimal free resolution of $D$ can only be strictly smaller than the rank of the minimal free resolution of $H(D)$ when the maps $\del_{i,j}$ for $i - j > 1$ in Theorem~\ref{thm:deformation} are not minimal. This raises a new question:
\begin{question}
\label{question}
Let $R$ be a graded local ring of Krull dimension $d$, and let $D$ be a differential $R$-module such that $H(D)$ is nonzero and has finite projective dimension and finite length. Let $M$ be the minimal free resolution of $D$. What is a maximal lower bound for $\on{rank}(M)$ in terms of $d$?
\end{question}

Iyengar and Walker produce examples in \cite{IW} where $H(M)=k^{\oplus 2}$ and where, for $d$ even, $\rank M= \binom{d+2}{\frac{d}{2}+1}$.  Thus, any bound in Question \ref{question} must be significantly smaller than $2^d$ (see~\cite[Remark 4.9]{IW}). For instance, recent work of Banks-VandeBogert provides examples where $H(M)=k$ and where $\rank M = 2^{d-1}$~\cite[Proposition 3.8]{banks-keller}.

The paper is organized as follows.  Section~\ref{sec:background} provides background on differential modules, including a summary of how to construct free flag resolutions.  We prove Theorem~\ref{thm:deformation} in Section \ref{sec:deformation}. In Section~\ref{minsection}, we introduce a minimization procedure for differential modules and use it to prove Theorem~\ref{thm:exist}.  Section~\ref{sec:examples and applications} contains a number of applications of our main results to the study of Betti numbers, Cohen-Macaulay codimension two and Gorenstein codimension three examples, and more.

\subsection*{Acknowledgements} We thank Maya Banks, David Eisenbud, Srikanth Iyengar, and Frank-Olaf Schreyer for helpful conversations, and we thank the referee for their careful reading and useful comments. The computer algebra system {\em Macaulay2}~\cite{M2} provided valuable assistance throughout our work.

\subsection*{Conventions}  
Throughout this paper, $R$ denotes a \defi{graded local ring}, by which we mean an $\mathbb N$-graded Noetherian (not necessarily commutative) ring $R=\bigoplus_{i \ge 0} R_i$, where $R_0$ is (not necessarily commutative) local ring.\footnote{The Noetherian and local assumptions are not necessary in \S \ref{sec:background}, except in the remarks concerning finitely generated/minimal free flag resolutions in Constructions \ref{const:stai} and \ref{killing cycles}.}  The arguments for graded local commutative rings go through verbatim without the commutative hypothesis, so there is no cost to working in this generality.  Also, as noted above, we are partly motivated by applications involving exterior algebras.  All $R$-modules are left modules; moreover, all $R$-modules and morphisms of $R$-modules are graded, though we may sometimes omit this hypothesis to avoid tedious repetitions.

\section{Background on differential modules}\label{sec:background}
\subsection{Basic definitions}

Let $a \in \ZZ$.  A \defi{degree $a$ differential $R$-module} is a pair $(D, \del)$, where $D$ is an $R$-module, and $\del\colon D \to D(a)$ is a homogeneous, $R$-linear map such that $\del^2 = 0$. 
A \defi{morphism} $D \to D'$ of degree $a$ differential modules is a homogeneous map $f \colon D \to D'$ satisfying $f \del = \del'  f$.

We write $\DM(R,a)$ for the category of differential modules of degree $a$. The \defi{homology} of an object $D \in \DM(R,a)$ is the subquotient
$$
\ker(\del \colon D \to D(a) / \im(\del \colon D(-a) \to D),
$$
denoted $\HH (D)$. A morphism in $\DM(R, a)$ is a \defi{quasi-isomorphism} if it induces an isomorphism on homology. A \defi{homotopy} of morphisms $f, f' \colon D \to D'$ in $\DM(R, a)$ is a morphism $h \colon D\to D'(-a)$ of $R$-modules such that $f - f' = h \del + \del' h$. A differential module $D \in \DM(R, a)$ is \defi{contractible} if $\id_D$ is homotopic to 0. The \defi{mapping cone} of a morphism $f \colon D \to D'$ in $\DM(R, a)$ is the module $D' \oplus D(a)$ equipped with the differential
\[
\bordermatrix{
&D'&D(a)\cr
D'(a)& \del' & f \cr
 D(2a)& 0 & -\del 
 }.
 \]

\begin{prop}
\label{contractible}
An object in $\DM(R, a)$ is contractible if and only if it is isomorphic to an object of the form
\begin{equation}
\label{conobject}
M \oplus M(a) \xra{\begin{pmatrix} 0 & 1 \\ 0 & 0 \end{pmatrix}} M(a) \oplus M(2a)
\end{equation}
for some $R$-module $M$.
\end{prop}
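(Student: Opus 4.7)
The plan is to handle the two directions separately. The \emph{if} direction is a direct computation: given $M \oplus M(a)$ with differential $\partial = \bigl(\begin{smallmatrix} 0 & 1 \\ 0 & 0 \end{smallmatrix}\bigr)$, I would exhibit $h = \bigl(\begin{smallmatrix} 0 & 0 \\ 1 & 0 \end{smallmatrix}\bigr)$ as a graded endomorphism of degree $-a$ and verify by matrix multiplication that $\partial h + h\partial = \id$, which witnesses contractibility.

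For the \emph{only if} direction, the plan is to use the homotopy to build complementary idempotents that split $D$. Suppose $(D, \partial)$ is contractible via $h \colon D \to D(-a)$, so $\id_D = \partial h + h\partial$. Post-composing with $\partial$ and using $\partial^2 = 0$ yields the key identity $\partial h \partial = \partial$. From this, the grading-preserving endomorphisms $e_1 := h\partial$ and $e_2 := \partial h$ are a complementary pair of orthogonal idempotents: idempotence follows from $\partial h \partial = \partial$ (e.g.\ $e_1^2 = h(\partial h \partial) = h\partial = e_1$), and orthogonality follows formally from $e_1 + e_2 = \id$. This yields a decomposition $D = e_1 D \oplus e_2 D$ of graded $R$-modules. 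Setting $M := e_2 D = \im \partial$, I would show that $\partial$ restricts to an isomorphism $e_1 D \xrightarrow{\sim} e_2 D$ with two-sided inverse $h|_{e_2 D}$ (using $\partial h y = e_2 y = y$ for $y \in e_2 D$ and $h \partial x = e_1 x = x$ for $x \in e_1 D$); since $\partial$ raises degrees by $a$, this identifies $e_1 D \cong M(a)$ as graded modules. Assembling the pieces gives $D \cong M \oplus M(a)$ with differential of the required block form, which one verifies by tracking a homogeneous element through the identifications.

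The main challenge is purely bookkeeping the grading shifts: since $\partial$ and $h$ move degrees in opposite directions, one must be careful to realize $e_1 D$ as $M(a)$ rather than $M(-a)$. The correct choice is to implement the isomorphism $e_1 D \cong M(a)$ via $\partial$ (not $h$), since $\partial \colon (e_1 D)_j \xrightarrow{\sim} (e_2 D)_{j+a} = M(a)_j$; taking $M = e_2 D$ (rather than $e_1 D$) then makes the block matrix for $\partial$ come out exactly as stated.
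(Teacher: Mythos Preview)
Your proposal is correct and amounts to the same argument as the paper's. The paper takes $M = Z := \ker\del$ and writes down the isomorphism $D \to Z \oplus Z(a)$ explicitly as $d \mapsto \bigl((1-h\del)d,\ \del d\bigr)$ with inverse $(z,z') \mapsto z + hz'$; your idempotent decomposition $D = e_1D \oplus e_2D$ with $e_1 = h\del$, $e_2 = \del h$ produces exactly this map, since $1 - h\del = \del h$ and $e_2D = \im\del = \ker\del = Z$ once one knows $H(D)=0$.
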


\begin{proof}
An object of the form \eqref{conobject} may be equipped with the contracting homotopy $\begin{pmatrix} 0 & 0 \\ 1 & 0 \end{pmatrix}$. Conversely, suppose $(D, \del) \in \DM(R, a)$ is contractible. Choose a contracting homotopy $h : D \to D(-a)$. We first observe that, if $z \in D$ is a cycle, then $z = (\del h)(z)$; this implies that $H(D) = 0$. 
Let $Z$ denote the submodule of cycles in $D$, and consider $Z \oplus Z(a)$ as an object in $\DM(R, a)$ with differential $\begin{pmatrix} 0 & 1 \\ 0 & 0 \end{pmatrix}$. The map $D \to Z \oplus Z(a)$ given by the transpose of the matrix $\begin{pmatrix} 1 - h\del & \del \end{pmatrix}$ is an isomorphism of differential modules with inverse $\begin{pmatrix} 1 & h\end{pmatrix}$.
\end{proof}

\begin{example}
\label{fold}
Any complex $C = ( \cdots \to C_i \xra{\del_i} C_{i-1} \to \cdots)$ of $R$-modules determines a differential $R$-module, in the following way. We can ``fold" $C$ into an object in $\DM(R, a)$, for any $a \in A$; that is, we take $D = \bigoplus_{i} C_i(ia)$ and $\del \colon D \to D(a)$ to be the map induced by the $\del_i$. This gives a functor
$$
\Fold\colon \on{Com}(R) \to \DM(R, a)
$$
for all $a$. For example, let $R=k[x,y]$, and take
$$
K = \left( R  \xla{\begin{pmatrix} x & y\end{pmatrix}} R(-1)^{\oplus 2} \xla{\begin{pmatrix} -y \\ x \end{pmatrix}} R(-2)\right),
$$
the Koszul complex on $x$ and $y$, concentrated in homological degrees $0,1$, and 2. The object $\Fold(K) \in \DM(R,a)$ has underlying module
$$
D = R \oplus R(a-1)^{\oplus 2} \oplus R(2a-2) 
$$ 
and differential
$$
\del = \bordermatrix{
&R&R(a-1)&R(a-1)&R(2a-2)\cr
R(a)&0&x&y&0 \cr
R(2a-1)&0&0&0&-y \cr
R(2a-1)&0&0&0&x \cr
R(3a-2)&0&0&0&0
}.
$$
\end{example}

\begin{remark}
The category $\DM(R, a)$ is equivalent to the category of left dg-modules over the dg-algebra $R[x, x^{-1}]$ with trivial differential, where $x$ has homological degree $-1$ and internal degree $a$. The equivalence sends an object $(D, \del_D)$ to the complex 
$$
\cdots \xra{-x \del_D} D \xra{x \del_D} x D \xra{-x \del_D} \cdots
$$
with the obvious $R[x, x^{-1}]$-action. In particular, it follows from well-known results about dg-modules that $\DM(R, a)$ is an abelian category. The above notions of quasi-isomorphism, homotopy, and mapping cone for differential modules all correspond, via this equivalence, to the usual notions for complexes.
\end{remark}

\begin{remark}\label{rmk:a matters}
The categories $\DM(R, a)$ change as $a$ varies. For instance, suppose $R = \Z[x]/(x^2)$, where $|x| = 1$. The category $\DM(R, 1)$ contains a rank 1 free differential module that is exact but not contractible, namely $R \overset{x}{\longrightarrow} R(1)$. But, when $a \ne 1$, $\DM(R, a)$ contains no such object.

This marks a departure from the setting of chain complexes. If $M \to M'$ is an $R$-linear map that is homogeneous of degree $a$, one can twist the source or target to form an equivalent map that is homogeneous of degree 0; for this reason, it suffices to study complexes of graded modules whose differentials are homogeneous of degree $0$. But, as the previous example illustrates, this trick doesn't work in the setting of differential modules.
\end{remark}

\begin{remark}\label{rmk:other gradings}
If $R$ is graded by an arbitrary abelian group $A$, then one can define the category $\DM(R,a)$ in the same way as above.  As long as a version of Nakayama's Lemma holds for the $A$-graded ring $R$, one can apply standard techniques to extend all of the results in this paper to this setting.  For instance, our results extend to the case where $R=k[x_1,\dots,x_n]$ is equipped with the monomial $\mathbb N^n$-multigrading, which is the focus of~\cite{devries}.
\end{remark}
%
\subsection{Free flag resolutions}
\label{flagressection}

Recall the definition of a free flag resolution from Section \ref{results}. The following result is well-known; cf. \cite[Proposition~3.5]{stai}:
\begin{prop}
\label{resexists}
Every $D \in \DM(R, a)$ admits a free flag resolution. 
\end{prop}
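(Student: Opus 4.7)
The plan is to build a free flag $F$ by an iterative ``killing cycles'' construction analogous to the classical one for modules. First I would take $F_0$ to be a free $R$-module equipped with the zero differential, together with a map $\epsilon_0\colon F_0 \to D$ whose image lies in $\ker(\del_D)$ and which induces a surjection onto $H(D)$; for instance, one can take a free cover of $\ker(\del_D)$ with appropriate internal degree shifts. Because $F_0$ carries the zero differential, this is automatically a morphism in $\DM(R,a)$.

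Then I would inductively assume a partial flag $F^{(n)} = \bigoplus_{i=0}^n F_i$ with flag differential $\del$ and augmentation $\epsilon\colon F^{(n)}\to D$ has been constructed. Setting $K_n := \ker\bigl(\epsilon_*\colon H(F^{(n)}) \to H(D)\bigr)$, I would choose generators $\{[z_\alpha]\}$ for $K_n$. For each $\alpha$, I would pick a representing cycle $z_\alpha \in F^{(n)}$, and, using $[z_\alpha] \in K_n$, an element $y_\alpha \in D$ with $\del_D(y_\alpha) = \epsilon(z_\alpha)$. Then I would define $F_{n+1}$ to be free on basis elements $e_\alpha$, with internal degrees chosen so that the assignments $e_\alpha \mapsto z_\alpha$ and $e_\alpha \mapsto y_\alpha$ are homogeneous of the appropriate degrees, and extend $\del$ by $\del(e_\alpha) := z_\alpha$ and $\epsilon$ by $\epsilon(e_\alpha) := y_\alpha$. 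The flag condition $\del(F_{n+1}) \subseteq F^{(n)}$ is immediate, $\del^2(e_\alpha) = \del(z_\alpha) = 0$ because $z_\alpha$ is a cycle, and $\del_D(\epsilon(e_\alpha)) = \del_D(y_\alpha) = \epsilon(z_\alpha) = \epsilon(\del(e_\alpha))$, so the extended $\epsilon$ remains a morphism of differential modules.

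Passing to the colimit would produce $F = \bigoplus_{i \ge 0} F_i$, a free flag, with augmentation $\epsilon\colon F \to D$. To finish, I would verify that $\epsilon$ is a quasi-isomorphism. Surjectivity on homology is immediate from the choice of $F_0$. For injectivity, given a cycle $f$ of $F$ with $\epsilon_*[f] = 0$, I note that $f \in F^{(n)}$ for some $n$, so $[f] \in K_n$; by construction $f = \sum_\alpha r_\alpha z_\alpha + \del(g)$ for some $r_\alpha \in R$ and $g \in F^{(n)}$, whence $f = \del\bigl(\sum_\alpha r_\alpha e_\alpha + g\bigr)$ in $F^{(n+1)}$, giving $[f] = 0$ in $H(F)$.

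The main obstacle will be purely organizational: tracking internal degree shifts so that each $F_i$ is genuinely a free graded $R$-module, and confirming that no hidden finiteness assumption is needed (there is none---$F_i$ is permitted to be infinitely generated if $K_{i-1}$ is). Beyond that the argument is mechanical and directly mirrors the classical construction of a free resolution of a module.
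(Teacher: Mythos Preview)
Your proposal is correct and is essentially the paper's Construction~\ref{killing cycles} (the ``killing cycles'' approach); the paper phrases the inductive step in mapping-cone language, but choosing cycles that generate $H(\cone(\epsilon_n))$ amounts exactly to your choice of pairs $(z_\alpha, y_\alpha)$ with $\del_D(y_\alpha)=\epsilon(z_\alpha)$. The paper also records a second, independent proof (Construction~\ref{const:stai}, a Cartan--Eilenberg-type resolution built from free resolutions of the boundaries and homology via two Horseshoe lemmas), which is needed later for Theorem~\ref{thm:deformation a}.
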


We give two proofs of Proposition \ref{resexists} (Constructions \ref{const:stai} and \ref{killing cycles}), as both constructions will be useful in this paper.

\begin{construction}\label{const:stai}
This construction uses a Cartan-Eilenberg-type resolution and appears in \cite[Lemma~3.5]{stai}. The base ring in \cite{stai} is assumed to have finite global dimension and is not graded, but the construction adapts easily to the graded case, and removing the finite global dimension assumption just allows for the resolution to be infinitely generated. 

In detail: let $Z$, $B$, and $H$ denote the cycles, boundaries, and homology of $D$. We have short exact sequences

\[
0 \to B\to Z \to H \to 0
\qquad
\text{ and} 
\qquad
0\to Z \to D \to B(a)\to 0.
\]
Choose a free resolution $F^B$ of $B$, and let $G^B$ be the same complex, but with $G^B_i=F^B_i(ia)$.  Notice that the differential on $G^B$ is homogeneous of degree $a$, with respect to the internal grading. Choose a free resolution $F^H$ of $H$, and define $G^H$ similarly. We obtain a differential module $G:=G^B\oplus G^H\oplus G^{B(a)}$ whose differential
decomposes as $\del+\epsilon$, where:
\begin{itemize}
	\item $\del\colon G\to G(a)$ is the degree $a$ folding (see Example \ref{fold}) of a free resolution of $D$ obtained by applying the Horseshoe Lemma to the two above exact sequences, and
	\item $\epsilon\colon G\to G(a)$ sends the third summand $G^{B(a)}$ of $G$ to the first summand $G^{B}(a)$ of $G(a)$ via the identity.
\end{itemize}
One can check that a flag structure on $G$ may given by
\[
G_i =G^B_i \oplus G^H_i \oplus G^{B(a)}_{i-1}.
\]
Indeed, $(\del+\epsilon)\colon G\to G(a)$ satisfies $(\del+\epsilon)(G_i)\subseteq G_{i-1}(a)\oplus G_{i-2}(a)$.

The rest of Stai's argument goes through essentially verbatim. Note that, when $D$ is finitely generated, we can choose $F^B$ and $F^H$ so that they're minimal, but the resulting free flag resolution will still typically not be minimal. When $F^B$ and $F^H$ are finite free resolutions whose terms are finitely generated, the resulting free flag resolution is finitely generated.\end{construction}

\begin{construction}\label{killing cycles}
Our second construction mimics the usual algorithm for resolving a module. Choose a set of cycles in $D$ that descends to a homogeneous generating set of $H(D)$. Let $F_0$ be a free $R$-module with basis indexed by this set, where the basis element corresponding to a cycle $c$ is in degree $|c|$. Regard $F_0$ as a differential module with trivial differential, and let $\epsilon_0\colon F_0 \to D$ be the morphism of differential modules that sends each basis element to its associated cycle. Next, choose a set of cycles in $\cone(\epsilon_0)$ that descends to a homogeneous generating set of $H(\cone(\epsilon_0))$. Construct $F_1$ in the same way as $F_0$, and define $\epsilon_1 \colon F_1 \to \cone(\epsilon_0)$ in the same way as well. Notice there is a natural map $\on{cone}(\epsilon_0) \to \on{cone}(\epsilon_1)$. Continuing in this way, we get a sequence
$$
\on{cone}(\epsilon_0) \to \on{cone}(\epsilon_1) \to \on{cone}(\epsilon_2) \to \cdots
$$
of differential modules. Taking the colimit, we obtain a flag $F = \bigoplus_{i \ge 0} F_i$ and a quasi-isomorphism $\epsilon \colon F \xra{\simeq} D$. If $H(D, \del)$ is finitely generated, then we can choose minimal generating sets in every step of this process, yielding a free flag resolution where $F_i$ is finitely generated for all $i$.
\end{construction}

\begin{remark}
Constructions~\ref{const:stai} and \ref{killing cycles} can both be implemented in a computer algebra package.  
It would be interesting to understand which of these constructions--or perhaps even some other construction--is the most efficient.
\end{remark}

\vskip\baselineskip

\begin{remark}
Just like free resolutions of modules, free flag resolutions of differential modules are not unique. But there is an additional choice involved when constructing a free flag resolution: not only are the differential and underlying module of a free flag resolution not unique, but, once these are fixed, the flag structure is not unique. 

For instance, take $R=k[x,y]$, where $k$ is a field, and let $D$ be the differential module $k\overset{0}{\to} k$ in $\DM(R,0)$. A natural choice for a free flag resolution $F \xra{\simeq} D$ is to set $F_0=R, F_1=R(-1)^{\oplus 2}$, and $F_2=R(-2)$; and to equip $F = F_0 \oplus F_1 \oplus F_2$ with the Koszul differential.  But another flag resolution is given by setting $F_0=R, F_1=R(-1), F_2=R(-1)$ and $F_3=R(-2)$ with differential
\[
\xymatrix{
R(-2) \ar[r]^-{-y}\ar@/_1.8pc/[rr]_-{x}& R(-1)\ar[r]^-{0}\ar@/_-1.8pc/[rr]^-{x}& R(-1)\ar[r]^-{y}&R.
}
\]
There are infinitely many other choices for the flag structure on this resolution.
\end{remark}

We now turn to the lifting property of free flag resolutions.  In classical homological algebra, maps of modules may be lifted to maps on free resolutions. A similar statement holds for differential modules: given a diagram
\begin{equation}\label{eqn:lifting}
\xymatrix{
F  \ar[r]^{\epsilon}& D\ar[d]^f\\
F' \ar[r]^{\epsilon'}& D'
}
\end{equation}
in $\DM(R, a)$, where $\epsilon$ and $\epsilon'$ are free flag resolutions, there is a map $\tf\colon F \to F'$ that makes the diagram commute, up to homotopy. More generally, we have the following result. The proof is similar to the proof of the analogous statement for complexes.

\begin{lemma}[cf. \cite{FHT} Proposition 6.4] \label{prop:lifting}
Suppose we have a diagram of differential $R$-modules of degree $a$, as in \eqref{eqn:lifting}, 
where $F$ is a summand of a free flag, and $\epsilon'$ is a quasi-isomorphism. There is a morphism $\tf\colon  F \to F'$ that makes \eqref{eqn:lifting} commute up to homotopy, and $\tf$ is unique up to homotopy.
\end{lemma}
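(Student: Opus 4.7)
The plan is to build $\tf$ together with the witnessing homotopy inductively along the flag filtration of $F$, using both the surjectivity and injectivity of $H(\epsilon')$ at each stage. First I would reduce to the case that $F$ itself is a free flag: if $F$ is a summand of a free flag $\widetilde F = F\oplus G$, extend $\epsilon\colon F\to D$ to $\widetilde F\to D$ by zero on $G$, which is a morphism of differential modules because a direct-sum splitting of differential modules respects $\del$. A lift and homotopy on $\widetilde F$ then restrict to a lift and homotopy on $F$, reducing both existence and uniqueness to the flag case.

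For existence, write $F = \bigoplus_{i\geq 0} F_i$ as a free flag, and inductively construct $\tf|_{F_i}$ and a homotopy $h|_{F_i}\colon F_i\to D'(-a)$ satisfying $f\epsilon - \epsilon'\tf = \del_{D'}h + h\del_F$ on $F_{\leq i}$. In the base case $\del_F$ vanishes on $F_0$, so each $f\epsilon(e)$ for $e\in F_0$ a basis element is a cycle in $D'$; surjectivity of $H(\epsilon')$ supplies $z\in F'$ and $y\in D'$ with $f\epsilon(e) - \epsilon'(z) = \del_{D'}(y)$, and we set $\tf(e) = z$, $h(e) = y$. In the inductive step, $\tf(\del_F(e))$ is automatically a cycle in $F'$, and a short calculation using the inductive homotopy relation shows that its image under $\epsilon'$ is a boundary in $D'$; injectivity of $H(\epsilon')$ then promotes $\tf(\del_F(e))$ itself to a boundary $\del_{F'}(z)$ in $F'$. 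A further short computation shows $f\epsilon(e) - h\del_F(e) - \epsilon'(z)$ is a cycle in $D'$, and after adjusting $z$ by a suitable cycle in $F'$---supplied again by surjectivity of $H(\epsilon')$---this cycle becomes a boundary $\del_{D'}(h(e))$, completing the induction.

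For uniqueness, it suffices to show that any morphism of differential modules $g\colon F\to F'$ whose composite $\epsilon' g$ is null-homotopic via some $k\colon F\to D'(-a)$ is itself null-homotopic; apply this to $g = \tf_1 - \tf_2$ and $k = h_1 - h_2$. I would again induct on the flag filtration, but now simultaneously construct a primary homotopy $s\colon F\to F'(-a)$ with $g = \del_{F'}s + s\del_F$ and a secondary homotopy $t\colon F\to D'(-2a)$ with $k - \epsilon' s = \del_{D'}t + t\del_F$. Alternating injectivity and surjectivity of $H(\epsilon')$ at each stage produces $s$ and $t$ on $F_i$ from the same data on $F_{\leq i-1}$, in essentially the same pattern as in the existence proof. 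The main obstacle I anticipate is the bookkeeping for this secondary homotopy: the element $s(e)$ produced by injectivity is only determined up to a cycle in $F'$, and carrying along the auxiliary data $t$ is exactly what lets one pin down a representative for which the primary homotopy relation continues to hold at the next stage of the flag.
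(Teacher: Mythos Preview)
Your proposal is correct and is precisely the standard semifree lifting argument, adapted from complexes to differential modules. The paper does not actually give a proof of this lemma: it only remarks that ``the proof is similar to the proof of the analogous statement for complexes'' and points to \cite{FHT}~Proposition~6.4, which is exactly the inductive construction along the flag filtration that you outline (reduction to the flag case via the splitting, then alternating use of surjectivity and injectivity of $H(\epsilon')$ to build $\tf$, $h$, and in the uniqueness argument the auxiliary secondary homotopy $t$). So your approach and the paper's intended approach coincide.
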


\begin{remark}\label{rmk:functorial}
It follows immediately from the definition that free resolutions of differential modules inherit the lifting property of Lemma~\ref{prop:lifting}.
\end{remark}

\section{Degeneration to the homology}\label{sec:deformation}
In this section, we prove Theorem~\ref{thm:deformation}.  We will use the following elementary fact: if $(F,\del)$ is a differential module, and $Q\colon F\to F$ is an automorphism of $F$, then $(F,Q^{-1}\del Q)$ is isomorphic to $(F,\del)$ as a differential module.
\begin{remark}\label{rmk:degeneration}
Here is the sense in which the minimal free resolution of $\HH(D)$ is a degeneration of the minimal free resolution of $D$.  Consider the differential appearing in Theorem~\ref{thm:deformation}.  Multiplying $\del_{i,j}$ by $t^{i-j-1}$ yields a family of free differential modules over $R[t]$ that are quasi-isomorphic to $D$ for $t\ne 0$ and quasi-isomorphic to $\HH(D)\overset{0}{\to}\HH(D)$ for $t=0$. 
\end{remark}

We now prove the following generalization of Theorem~\ref{thm:deformation}, allowing $a\ne 0$, which complicates the notation somewhat:
\begin{thm}\label{thm:deformation a}
Let $D\in \DM(R,a)$, and assume that $\HH(D)$ is finitely generated. Let
\[
F_0 \overset{\del_{1,0}}{\longleftarrow}F_1 \overset{\del_{2,1}}{\longleftarrow}F_2 \overset{\del_{3,2}}{\longleftarrow}\cdots
\]
be a minimal free resolution of $\HH(D)$, twisted so that all maps are homogeneous of degree $a$.\footnote{Thus, replacing $F_i$ by $F_i(-ia)$ and considering the same maps would yield a minimal free resolution of $\HH(D)$ that is homogeneous of degree $0$.  See Remark~\ref{rmk:a matters}.} There exists free flag resolution of $D$, where the underlying module is $\bigoplus_{i} F_i$ and the differential has the form:
\[
\del=\bordermatrix{
	&F_0&F_{1}&F_{2}&F_{3}&\cdots &F_n &\cdots \cr
F_{0}(a)&0&\del_{1,0}&\del_{2,0}&\del_{3,0}&\cdots &\del_{n,0}&\cdots \cr
F_{1}(a)&0&0&\del_{2,1}&\del_{3,1}&\cdots &\del_{n,1}&\cdots \cr
\ \ \vdots &\vdots&\vdots&\vdots&\vdots&\ddots &\vdots&\ddots \cr
F_{n-1}(a)&0&0&0&0&\cdots &\del_{n,n-1}&\cdots \cr
F_n(a)&0&0&0&0&\cdots &0&\cdots \cr	
\ \ \vdots &\vdots&\vdots&\vdots&\vdots&\ddots &\vdots&\ddots \cr
}.
\]
\end{thm}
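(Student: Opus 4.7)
The plan is to execute Construction~\ref{killing cycles} while consistently making minimal choices at every stage, and to verify by induction that the resulting free flag resolution has the prescribed form. At the zeroth step, I choose cycles in $D$ lifting a minimal generating set of $\HH(D)$ and let $F_0$ be the free module on this set, equipped with zero differential; by minimality, this $F_0$ agrees with the zeroth module in the minimal resolution of $\HH(D)$, and $\epsilon_0\colon F_0\to D$ induces a surjection on homology.

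Now I proceed by induction on $n$, maintaining the hypothesis that $F_0,\dots,F_n$ match the first $n+1$ modules of the minimal resolution of $\HH(D)$, that the map $\epsilon_i\colon F_i\to \cone(\epsilon_{i-1})=D\oplus F_0(a)\oplus\cdots\oplus F_{i-1}(a)$ has $F_{i-1}(a)$-component equal to $\del_{i,i-1}$ for each $i\le n$, and that $\HH(\cone(\epsilon_{n-1}))\cong\im(\del_{n,n-1})$ via projection onto the $F_{n-1}(a)$-summand (up to degree shift by $a$). Applying the periodic homology long exact sequence to the triangle $F_n\to\cone(\epsilon_{n-1})\to\cone(\epsilon_n)\to F_n(a)$, the induced map $F_n=\HH(F_n)\to\HH(\cone(\epsilon_{n-1}))$ is, under the inductive identification, exactly $\del_{n,n-1}$, hence surjective onto $\im(\del_{n,n-1})$. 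Exactness then yields $\HH(\cone(\epsilon_n))\cong\ker(\del_{n,n-1})=\im(\del_{n+1,n})$ via projection onto the $F_n(a)$-summand, which secures the inductive identification one step further. A minimal generating set of this homology produces $F_{n+1}$ matching the next term of the minimal resolution, and choosing cycle representatives in $\cone(\epsilon_n)$ whose $F_n(a)$-components are the columns of $\del_{n+1,n}$ defines $\epsilon_{n+1}$ with the required matching entry. The remaining components of $\epsilon_{n+1}$ into $D$ and into the earlier $F_j(a)$ may then be chosen freely so that the cycle condition in $\cone(\epsilon_n)$ is satisfied; this is solvable because $F_{n+1}$ is free and the obstructions lie in images of $\del_D$ and of earlier blocks of the flag differential, by the cycle conditions established at prior stages together with the relation $\del_{n,n-1}\circ\del_{n+1,n}=0$.

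Taking the colimit of the iterated cone construction then yields the flag $F=\bigoplus_i F_i$ with augmentation $\epsilon\colon F\to D$; the differential $\del_F$ on $F$ has the property that the $F_j(a)$-component of $\del_F|_{F_i}$ equals $\epsilon_i|_{F_j(a)}$, so the entries $\del_{i,i-1}$ immediately above the main diagonal agree with the minimal free resolution of $\HH(D)$, and the higher entries $\del_{i,j}$ with $i-j\ge 2$ are recorded from the chosen lifts. The main obstacle is the bookkeeping with the degree-$a$ shifts in the periodic long exact sequence and verifying the inductive identification $\HH(\cone(\epsilon_{n-1}))\cong\im(\del_{n,n-1})$; once that identification is secured and the compatibility between the projection to $F_{n-1}(a)$ and the map $\del_{n,n-1}$ is traced carefully through the LES, the freeness of each $F_{n+1}$ makes the existence of all required lifts routine.
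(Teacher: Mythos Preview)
Your argument is correct and takes a genuinely different route from the paper. The paper starts from Construction~\ref{const:stai}: it builds the Cartan--Eilenberg resolution $G^B\oplus G^H\oplus G^{B(a)}$ from minimal resolutions of the boundaries and homology, writes the differential as a $3\times 3$ block matrix involving the Horseshoe maps $\alpha,\beta,\gamma$, and then explicitly conjugates by two invertible matrices to split off the contractible summand $G^B\oplus G^{B(a)}$, leaving $(G^H,\del^H-\beta\tau^{-1}\alpha)$ with $\tau=\id+\gamma$. You instead run Construction~\ref{killing cycles} with minimal choices and verify inductively, via the periodic long exact sequence of the cone triangle, that $H(\cone(\epsilon_n))$ is precisely the $(n{+}1)$st syzygy module of $H(D)$; this forces each $F_{n+1}$ and each superdiagonal block $\del_{n+1,n}$ to match the minimal resolution of $H(D)$.

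Your approach is arguably more conceptual and avoids the matrix manipulations entirely. The paper's approach, on the other hand, buys something your argument does not: a closed-form description of every off-diagonal block as $\del_{i,j}=(-1)^{i-j}\beta_{j+1}\gamma_{j+2}\cdots\gamma_{i-1}\alpha_i$ for $i-j\ge 2$, expressed directly in terms of the Horseshoe data. This formula is what drives Remark~\ref{rmk:deformation cor} (if either $\alpha$ or $\beta$ is minimal then the whole flag is minimal) and underpins the applications in Section~\ref{sec:examples and applications}. In your version the higher $\del_{i,j}$ are simply ``whatever lifts one chooses,'' which is enough for the theorem as stated but would require additional work to recover those structural consequences. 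Two minor points worth tightening: the base case of your induction needs the convention $\cone(\epsilon_{-1})\coloneqq D$ with the identification $H(D)\cong\im(F_0\to H(D))$ spelled out, and the acyclicity of $\cone(\epsilon)$ deserves a sentence (the transition maps $H(\cone(\epsilon_n))\to H(\cone(\epsilon_{n+1}))$ vanish by your LES, and filtered colimits commute with homology).
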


\begin{proof}
Apply Construction \ref{const:stai} to get a free flag resolution of $D$ of the form $G^B \oplus G^H \oplus G^B(a)$, with $G^H$ and $G^B$ constructed from minimal free resolutions of the homology and boundaries of $D$, respectively. The differential on this free flag resolution is of the form
$$
\del=\bordermatrix{
&G^B&G^H&G^{B(a)} \cr
G^B(a)&\del^B&\alpha&\id+\gamma \cr
G^H(a)&0&\del^H&\beta\cr
G^{B(a)}(a)&0&0&\del^B
},
$$
where $\del^B$ and $\del^H$ are the differentials on $G^B$ and $G^H$. Here, the map ``$\id$" in the upper-right corner arises from the map $\epsilon$ from Construction \ref{const:stai}. The rest of the matrix arises from the map $\del$ in Construction \ref{const:stai}, i.e. the differential on the free resolution of the underlying module of $D$ obtained by applying the Horseshoe Lemma twice.
We set $\tau:=\id+\gamma$. Note that $\gamma$ is nilpotent, since it decreases homological degree and $G^B$ is a bounded below complex. It follows that $\tau$ is invertible, with inverse $\id -\gamma+\gamma^2-\gamma^3 +\cdots$.
%
We now partially minimize this differential module.  We start by conjugating by the diagonal matrix
$$
\begin{pmatrix}
\id & 0 & 0 \\
0 & \id & 0 \\
0 & 0 & \tau^{-1} \\
\end{pmatrix},
$$
yielding the differential
\[
\del' =\begin{pmatrix}
\del^B&\alpha&\id \cr
0&\del^H&\beta\tau^{-1}\cr
0&0&\tau\del^B\tau^{-1}
\end{pmatrix}.
\]
Since $(\del')^2 =0$, we have the relations
\begin{equation}
\label{eqn:del'}
\del^B\alpha + \alpha\del^H = 0, \quad \del^B +\alpha\beta\tau^{-1}+\tau\del^B\tau^{-1} = 0, \quad \del^H\beta\tau^{-1}+\beta\del^B\tau^{-1} = 0.
\end{equation}
Next, we let $Q$ be the automorphism
\[
Q = \begin{pmatrix}
\id&0&0\cr
\beta\tau^{-1}&\id&0\cr
\tau\del^B\tau^{-1}&-\alpha&\id\cr
\end{pmatrix};
\]
note that
\[
Q^{-1} =
\begin{pmatrix}
\id&0&0\cr
-\beta\tau^{-1}&\id&0\cr
-\tau\del^B\tau^{-1}-\alpha\beta\tau^{-1}&\alpha&\id\cr
\end{pmatrix}.
\]
A direct computation, using the relations in \eqref{eqn:del'}, shows that
\begin{align*}
Q^{-1}\del' Q 
&=\begin{pmatrix}
0&0&\id\cr
0&\del^H-\beta\tau^{-1}\alpha&0\cr
0&0&0
\end{pmatrix}
\end{align*}
It follows that our free flag resolution is isomorphic, as a differential module, to the direct sum of $(G^H,\del^H - \beta\tau^{-1}\alpha)$ and a contractible differential module.  In particular, there is a quasi-isomorphism $(G^H, \del^H - \beta\tau^{-1}\alpha) \xra{\simeq} D$.
Using that $\tau^{-1}=\id-\gamma+\gamma^2-\gamma^3+\cdots$, we see that the resulting differential has the form:
\[
\bordermatrix{
&G^H_0&G^H_1&G^H_2&G^H_3&G^H_4&\cdots &\cr
G^H_0(a)&0&\del^H_1&-\beta_1\alpha_2&\beta_1\gamma_2\alpha_3&-\beta_1\gamma_2\gamma_3\alpha_4&\cdots &\cr
G^H_1(a)&0&0&\del^H_2&-\beta_2\alpha_3&\beta_2\gamma_3\alpha_4&\cdots &\cr
G^H_2(a)&0&0&0&\del^H_3&-\beta_3\gamma_4&\cdots &\cr
G^H_3(a)&0&0&0&0&0&\cdots &\cr
\vdots &\vdots&\vdots&\vdots&\vdots&\vdots&\ddots \cr
}.
\]
Here, $\a_i$ denotes the restriction of $\a$ to $G_i^H$, and similarly for $\b_i$ and $\gamma_i$. This is clearly a free flag resolution, completing the proof.
\end{proof}

\begin{remark}\label{rmk:deformation cor}
The proof of Theorem~\ref{thm:deformation} gives an explicit formula for the entries of $\del_{i,j}$ in terms of the chain maps 
$\alpha, \beta, \gamma$ arising from the applications of the Horseshoe Lemma.   The entries $\del_{i,i-1}$ are always minimal, since they come from a minimal free resolution of $\HH(D)$.  Every other entry has both an $\alpha$ and a $\beta$ term, and thus if either $\alpha$ or $\beta$ is minimal, then $F$ itself is a minimal free resolution of $D$. On the other hand, if any $\del_{i, j}$ for $i - j > 1$ is not minimal, the minimal resolution of $D$ has smaller rank than the minimal resolution of $H(D)$. This happens, for instance, in Example~\ref{notflag}.
\end{remark}

\begin{remark}
In~\cite{ABI}, the \defi{free class} of a free flag $F$ is defined to be the minimal $\ell$ such that $F$ has a flag structure $F=F_0\oplus F_1\oplus \cdots \oplus F_\ell$.  One can extend this definition to an arbitrary differential module over a local ring by setting the free class of $D$ to be the minimal $\ell$ such that $D$ admits a free flag resolution $F\to D$ where $F$ has free class $\ell$.  Theorem~\ref{thm:deformation} implies that the free class of $D$ is always bounded above by the projective dimension of $\HH(D)$.  It would be interesting to better understand when this inequality is an equality and when it is a strict inequality.  For instance, if $\HH(D)$ is Cohen-Macaulay and has finite projective dimension, then combining Theorem~\ref{thm:deformation} with~\cite[Class Inequality]{ABI} implies that $D$ has free class exactly equal to the projective dimension of $\HH(D)$.
\end{remark}

\section{Minimal free resolutions}
\label{minsection}
Recall that $R$ is a Noetherian, $\mathbb N$-graded ring $R=\bigoplus_{i\geq 0} R_i$, where $R_0$ is a local ring. We write $\mathfrak m$ for the maximal ideal of $R$, which is the sum of the maximal ideal of $R_0$ and the maximal homogeneous ideal $R_+$.  
We say a morphism $f \colon M \to N$ of $R$-modules is \defi{minimal} if $f(M) \subseteq \m N$.  We write $k$ for $R/\mathfrak m$.

In this section, we prove our existence and uniqueness results for minimal free resolutions.  We first introduce a minimization procedure for finitely generated, free differential modules.  

\begin{prop}\label{prop:minimization procedure}
If $F$ is a finitely generated free differential $R$-module, then $F \cong M \oplus T$, where $M$ is minimal and $T$ is contractible.
\end{prop}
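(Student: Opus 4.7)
The plan is to reduce modulo $\m$, decompose the resulting finite-dimensional differential module over $k$, and then lift the decomposition to $F$. Set $\bar F := F/\m F$, and let $\bar\del\colon \bar F \to \bar F(a)$ denote the induced $k$-linear differential. Since $\bar\del^2 = 0$, standard linear algebra over a field yields a splitting $\bar F = \bar M \oplus \bar N \oplus \bar\del(\bar N)$, where $\bar\del|_{\bar M} = 0$ and $\bar\del$ restricts to an isomorphism from $\bar N$ onto $\bar\del(\bar N)$.

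The next step is to lift this splitting to an $R$-basis of $F$. Choose elements $m_i \in F$ lifting a $k$-basis of $\bar M$ and $t_j \in F$ lifting a $k$-basis of $\bar N$, and set $s_j := \del_F(t_j)$; the $\bar s_j$ then form a basis of $\bar\del(\bar N)$. By Nakayama's Lemma, $\{m_i\} \cup \{t_j\} \cup \{s_j\}$ is an $R$-basis of $F$. Moreover, $\del_F^2 = 0$ immediately gives $\del_F(s_j) = 0$, so the submodule $T := \langle t_j, s_j\rangle$ is already a sub-differential module, and it is contractible by Proposition \ref{contractible}.

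The main step is to adjust the $m_i$ so that their $R$-span is also closed under $\del_F$. Write
\[
\del_F(m_i) = \sum_j \alpha_{ij}\, m_j + \sum_j \beta_{ij}\, t_j + \sum_j \gamma_{ij}\, s_j,
\]
with all coefficients in $\m$, since $\bar\del(\bar m_i) = 0$. Replacing each $m_i$ by $m_i' := m_i - \sum_j \gamma_{ij}\, t_j$ is a unipotent change of basis that kills the $s_j$-coefficients of $\del_F(m_i')$. Expanding $\del_F^2(m_i') = 0$ in the new $R$-basis and reading off the $s_j$-component then forces the remaining $t_j$-coefficients to vanish as well. This yields the decomposition $F = M \oplus T$ with $M := \langle m_i'\rangle$, in which $M$ is minimal and $T$ is contractible.

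The main subtlety is that a single adjustment of the $m_i$ must simultaneously eliminate both the $s_j$- and $t_j$-components of $\del_F(m_i')$: the former vanishes by construction, while the latter vanishes as a consequence of $\del_F^2 = 0$ together with the linear independence of the $s_j$ in the $R$-basis. One should also verify that the adjusted $m_i'$ still reduce to a basis of $\bar M$ modulo $\m$, which is automatic since $\gamma_{ij} \in \m$, so the resulting $M$ indeed has the required minimality property.
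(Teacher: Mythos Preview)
Your argument is correct, but it takes a different route from the paper's. The paper proceeds one unit at a time: if $\del_F$ is not minimal, one locates an off-diagonal unit entry in its matrix, performs explicit row and column operations to split off a rank~$2$ contractible summand, and then repeats on the strictly smaller remaining piece until the differential is minimal. Your proof instead works globally: you pass to $\bar F = F/\m F$, decompose the resulting differential $k$-vector space in one stroke as $\bar M \oplus \bar N \oplus \bar\del(\bar N)$, lift a basis, and then use a single unipotent change of basis together with the relation $\del_F^2 = 0$ to close $M$ under $\del_F$. The key computation---that killing the $s_j$-coefficients of $\del_F(m_i')$ forces, via the $s_j$-component of $\del_F^2(m_i') = 0$, the $t_j$-coefficients to vanish as well---is clean and avoids the induction entirely. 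The paper's approach is more hands-on and algorithmic (and perhaps closer to how one would implement this in a computer algebra system), while yours is more conceptual and dispatches the whole decomposition at once; both are valid and neither is obviously shorter once all details are written out.
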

\begin{proof}
If $F$ is minimal, then we are done, so assume otherwise. Choose a basis of $F$, and view $\partial_F$ as a matrix with respect to this basis. We first observe that the condition $\partial_F^2 =0$ forces $\del_F$ to have a unit entry $u$ that does not lie on the diagonal. Indeed, suppose that the $(i,i)$ entry of $\partial_F$ is $u$, but no other entries in row $i$ or column $i$ are units. Then the $(i,i)$ entry of $\partial_F^2$ is $u^2$ modulo the maximal ideal, which is impossible since $\partial_F^2=0$. Without loss of generality, we assume that the (2,1) entry is a unit.

Let $B$ be the matrix corresponding to the row operations that zero out all other entries in the first column of $\partial_F$.  The matrix $B\partial_FB^{-1}$ has the form
\begin{align*}
B\partial_F B^{-1}
&=
\begin{pmatrix}
0&a_{1,2}&a_{1,3}&\cdots \\
u&a_{2,2}&a_{2,3}&\cdots \\
0&a_{3,2}&a_{3,3}&\cdots\\
0&a_{4,2}&a_{4,3}&\cdots\\
\vdots&\vdots&\vdots&\ddots
\end{pmatrix}.\\
\intertext{
Let $C$ be the matrix corresponding the column operations that zero out all the other entries in the second row of $B\partial_F B^{-1}$.  This is an identity matrix, except in the top row.  It follows that $C^{-1}B\partial_F B^{-1}C$ has the form}
C^{-1}B\partial_F B^{-1}C
&=
\begin{pmatrix}
0&a'_{1,2}&a'_{1,3}&\cdots \\
u&0&0&\cdots \\
0&a'_{3,2}&a'_{3,3}&\cdots\\
0&a'_{4,2}&a'_{4,3}&\cdots\\
\vdots&\vdots&\vdots&\ddots
\end{pmatrix}.\\
\intertext{
The fact that this matrix squares to zero and $u$ is a unit then forces the first row and the second column of this matrix to also be zero, yielding:}
C^{-1}B\partial_F B^{-1}C
&=
\begin{pmatrix}
0&0&0&\cdots \\
u&0&0&\cdots \\
0&0&a'_{3,3}&\cdots\\
0&0&a'_{4,3}&\cdots\\
\vdots&\vdots&\vdots&\ddots
\end{pmatrix}.
\end{align*}
Since $B$ and $C$ are automorphisms of $F$, we conclude that
$
F = F'\oplus T
$
for some differential module $F'$, where $T$ is a rank 2 free $R$-module, and $\del_T = \begin{pmatrix} 0&0\\u&0 \end{pmatrix}.$   Finally, by changing basis on $T$ via the diagonal matrix $(1,u^{-1})$,  we can assume $u = 1$. If $F'$ is minimal, then we are done; otherwise, we iterate.  The process terminates because the rank of $F'$ is strictly smaller than the rank of $F$, which is finite.
\end{proof}

We are now ready to prove our existence and uniqueness results for minimal free resolutions of differential modules:
\begin{thm}
\label{technical}
Let $D \in \DM(R, a)$. 
\begin{itemize}
\item[(a)] If $D$ admits a free flag resolution $F\to D$ such that $F$ is finitely generated, then $D$ admits a minimal free resolution $M\to D$ that is unique up to isomorphism of differential modules, and where $M$ is finitely generated. 
\item[(b)] If $R_0$ is a field, and $D$ admits a free flag resolution $F=\bigoplus F_i$ with $F_i$ finitely generated for all $i$ and such that
\begin{itemize}
\item each graded component $(F)_j$ is finite dimensional over $R_0$, and 
\item the graded components $(F)_j$ vanish for $j \ll 0$, 
\end{itemize}
then $D$ admits a minimal free resolution that is unique up to isomorphism of differential modules.
\end{itemize}
\end{thm}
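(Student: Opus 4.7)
Both parts of the theorem follow the same scheme: produce $M$ by splitting off a contractible direct summand of a suitable free flag resolution $\widetilde F \xra{\simeq} D$, and prove uniqueness by combining the lifting property of free resolutions (Remark~\ref{rmk:functorial}) with Nakayama's lemma.

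\textbf{Existence.} In part (a), the hypothesis furnishes a finitely generated free flag resolution $\widetilde F \xra{\simeq} D$, and Proposition~\ref{prop:minimization procedure} directly yields a decomposition $\widetilde F \cong M \oplus T$ of differential modules with $M$ minimal and $T$ contractible. Since $T$ is acyclic and $M \hookrightarrow \widetilde F$ is a split injection, the composition $M \hookrightarrow \widetilde F \to D$ is a quasi-isomorphism and therefore exhibits $M \to D$ as a minimal free resolution with $M$ finitely generated. For part (b), the key step is to extend Proposition~\ref{prop:minimization procedure} to the graded setting provided by the hypotheses. Every unit entry of $\partial_{\widetilde F}$ pairs basis elements in two internal degrees $j$ and $j+a$, and the elementary splitting that eliminates it involves only finitely many vectors in the finite-dimensional $k$-space $(\widetilde F)_j \oplus (\widetilde F)_{j+a}$. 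Processing the graded pieces in order starting from the bottom (available because $(\widetilde F)_j$ vanishes for $j \ll 0$), and using that any $R$-linear change of basis preserves the minimality of previously cleaned entries (since the image of a minimal map is automatically contained in $\m \widetilde F$), one obtains an $R$-module decomposition $\widetilde F \cong M \oplus T$ with $M$ minimal and $T$ (possibly of infinite rank) contractible. The composition $M \hookrightarrow \widetilde F \to D$ is then a minimal free resolution, and the stated graded finiteness properties for a basis of $M$ are inherited from $\widetilde F$.

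\textbf{Uniqueness.} Given two minimal free resolutions $\epsilon\colon M \to D$ and $\epsilon'\colon M' \to D$, the lifting property provides morphisms $f\colon M \to M'$ and $g\colon M' \to M$ whose compositions $gf$ and $fg$ lift $\id_D$; by the uniqueness clause of the lifting property they are homotopic to $\id_M$ and $\id_{M'}$, respectively. Writing $\id_M - gf = h\partial_M + \partial_M h$ and using $\partial_M(M) \subseteq \m M$ yields $(\id_M - gf)(M) \subseteq \m M$, so $gf \equiv \id \pmod{\m}$. In part (a), Nakayama's lemma applied to the finitely generated $M$ forces $gf$ to be surjective, and a surjective endomorphism of a finitely generated module over a graded local ring is an isomorphism. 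In part (b), graded Nakayama (applicable because the graded components of $M$ are bounded below) gives surjectivity, and then $gf$ restricts on each finite-dimensional graded component $M_j$ to a surjective $k$-linear endomorphism, which is therefore injective; this forces $gf$ to be injective overall. A symmetric argument for $fg$ shows that $f$ is an isomorphism of differential modules.

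\textbf{Main obstacle.} The delicate point is the graded extension of Proposition~\ref{prop:minimization procedure} used in part (b): the rank-drop termination from the finite rank case is unavailable, and one must verify that the possibly infinitely many elementary splittings assemble into a well-defined $R$-module decomposition of $\widetilde F$. The bounded-below hypothesis on the internal grading and the finite-dimensionality of each $(\widetilde F)_j$ over $k$ are the essential inputs that allow this inductive construction to go through without disturbing previously cleaned degrees.
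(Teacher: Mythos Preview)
Your overall scheme---split off a contractible summand for existence, lift and apply Nakayama for uniqueness---matches the paper, and part (a) together with the uniqueness argument is essentially the paper's proof. One small omission: before running the ``symmetric argument for $fg$'' you need $M'$ to satisfy the same finiteness as $M$; the paper extracts this from $fg \equiv \id_{M'} \pmod{\m}$, which forces $M'/\m M'$ to embed in $M/\m M$.

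The substantive difference is existence in (b). The paper does not iterate Proposition~\ref{prop:minimization procedure} through the grading. Instead it applies Zorn's Lemma to the poset $\Sigma$ of contractible differential submodules $C \subseteq F$ for which the inclusion splits. Given a chain $C_1 \subseteq C_2 \subseteq \cdots$, write $C_i = C_{i-1} \oplus T_i$; the natural map $F \to \prod_i T_i$ factors through $\bigoplus_i T_i$ because each graded piece of $F$ is finite-dimensional over $k$, and this surjection splits because free contractible objects are projective in $\DM(R,a)$. A maximal $C$ then gives $F = M \oplus C$, and $M$ must be minimal since otherwise Proposition~\ref{prop:minimization procedure} would enlarge $C$. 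This bypasses any convergence question entirely; the price is that the construction is nonconstructive.

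Your degree-by-degree approach can in principle be carried out, but the justifications you offer are not the right ones. The parenthetical ``since the image of a minimal map is automatically contained in $\m\widetilde F$'' does not explain why previously cleaned columns stay clean after a further change of basis; the actual reason is a degree count: the correction to entry $(p,q)$ produced by eliminating a unit in column $\ell$ is a multiple of $a_{i,q}$, which has internal degree $d_q - d_\ell$ and hence vanishes when $d_q < d_\ell$. And a single elementary splitting does \emph{not} involve only $(\widetilde F)_j \oplus (\widetilde F)_{j+a}$: the associated change of basis replaces every surviving generator $e_s$ with $d_s \ge d_\ell$ by $e_s + c_s e_\ell$, so infinitely many generators may move at once. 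What ultimately makes the limit exist is that each fixed generator is modified only while processing degrees at most its own, hence finitely often; this stabilization is what must be proven, and your ``Main obstacle'' paragraph names the issue without supplying it.
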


Note that Theorem \ref{technical} implies Theorem \ref{thm:exist} from the introduction. Indeed, Theorem~\ref{thm:deformation} shows that if $\HH(D)$ has finite projective dimension, then $D$ admits a finite free flag resolution; thus Theorem \ref{technical}(a) implies Theorem \ref{thm:exist}(a).  If, on the other hand, $D$ has degree $0$, then Theorem~\ref{thm:deformation} yields a flag resolution $F$ satisfying the hypotheses of Theorem \ref{technical}(b);  it follows that Theorem \ref{technical}(b) implies Theorem \ref{thm:exist}(b).  

However, the hypotheses of Theorem~\ref{technical} are strictly weaker than those of Theorem~\ref{thm:exist}: Example~\ref{ex:0x00} below shows that $D$ can have a finitely generated flag resolution even when $\HH(D)$ has infinite projective dimension, and any $D\in \DM(R,a)$ will satisfy the hypotheses of Theorem~\ref{technical}(b) if $R=k[x_1,\dots,x_n]/(f)$ is a graded hypersurface ring where $\deg(f)>2a$. 

\begin{proof}[Proof of Theorem \ref{technical}]
We first address the existence statements in parts (a) and (b). Existence in part (a) is immediate from Proposition \ref{prop:minimization procedure}. To prove existence in (b), we use Zorn's Lemma. Let $F$ be a free flag resolution satisfying the conditions in part (b), and let $\Sigma$ be the set of all contractible differential submodules $C$ of $F$ such that the inclusion $C \into F$ of differential modules admits a splitting. The set $\Sigma$ is a nonempty poset under inclusion. Let $C_1 \subseteq C_2 \subseteq \cdots$ be a chain in $\Sigma$. Each $C_i$ is a summand of $C_{i+1}$; for each $i \ge 1$, choose $T_i$ such that $C_{i} = T_{i} \oplus C_{i-1}$ (take $T_1 = C_1$). We claim that $\bigoplus_{i \ge 1} T_i \in \Sigma$. Indeed, we have morphisms $F \to T_i$ for each $i$, and therefore a morphism $F \to \prod_{i \ge 1} T_i$. Since the graded components of $F$ are finite dimensional over $R_0$, this morphism must factor through $\bigoplus_{i \ge 1} T_i$; moreover, the induced map $F \to\bigoplus_{i \ge 1} T_i$ is surjective. It is easy to check that free contractible objects in $\DM(R, a)$ are projective, and therefore this surjection splits. Thus, $\bigoplus_{i \ge 1} T_i \in \Sigma$, and it is clearly an upper bound for our chain. Applying Zorn's Lemma, we choose a maximal element $C$ of $\Sigma$. Write $F = M \oplus C$. If $M$ is not minimal, we can apply Proposition \ref{prop:minimization procedure} to split a contractible summand from $M$, contradicting the maximality of $C$. Thus, $M$ is a minimal free resolution of $D$. 

As for uniqueness, choose two minimal free resolutions $M$ and $M'$ of $D$.  Applying Lemma~\ref{prop:lifting}  to the identity map on $D$, we may choose morphisms of differential modules
$\a \colon M\to M'$ and $\b \colon M'\to M$ and homotopies $h$ and $h'$ such that
\begin{align*}
\b\a - \id_M&= h \del_M + \del_M h \\
\a\b - \id_{M'}&= h' \del_{M'} + \del_{M'}h'.
\end{align*}
Since $\del_M$ and $\del_{M'}$ are minimal, we conclude that $\b \a = \id_M$ and $\a \b = \id_{M'}$ modulo $\m$. 

Now, for uniqueness in part (a): the proof of existence in part (a) shows that we can assume that $M$ is a finite rank free module.  Since $\a \b = \id_{M'}$ modulo $\m$, this implies that $M'$ must also have finite rank.  The fact that $\b$ and $\a$ are isomorphisms follows from Nakayama's Lemma. For (b): the proof of existence in (b) shows that we can assume that the underlying module of $M$ has the form $\bigoplus_{j \in \Z} R(-j)^{\beta_j}$, where $\beta_j=0$ for $j\ll 0$ and is finite for all $j$. Since $\a \b = \id_{M'}$ modulo $\m$, the underlying module of $M'$ is a summand of the underlying module of $M$, and so it has the same form. The fact that $\b$ and $\a$ are isomorphisms now follows from the graded version of Nakayama's Lemma.
\end{proof}

\begin{remark}
\label{difficulty}
We include a few words about the difficulty of loosening the hypotheses on $D$ in Theorem \ref{technical}. Suppose $D$ admits a free flag resolution $F$ such that $F_n$ is finitely generated for all $n$; this is the case, for instance, when the homology of $D$ is finitely generated. In this case, one can use Proposition \ref{prop:minimization procedure} to minimize any finite piece $F_0 \oplus \cdots \oplus F_n$ of the flag $F$. Taking an appropriate limit, one can construct a minimal differential submodule $M$ of $F$ such that the inclusion $M \into F$ is a quasi-isomorphism. But, without the additional finiteness assumptions on $F$ in Theorem \ref{technical}, determining whether $M$ is a summand of $F$ becomes difficult.

Extending our uniqueness results creates similar problems. The lifting property in Lemma \ref{prop:lifting} does not immediately imply uniqueness unless one can apply Nakayama's Lemma to $M$. For instance, if $R$ is a (trivially graded) local ring, and $M$ is a free $R$-module with countably infinite basis, there exist maps $M\to M$ that are the identity modulo $\mathfrak m$ but that fail to be surjective. This problem doesn't arise when proving the uniqueness of minimal free resolutions of modules, because 
such resolutions are finitely generated in each homological degree, and chain maps respect homological degree. 
\end{remark}

\section{Examples and Applications}\label{sec:examples and applications}

\subsection{Betti numbers of differential modules}
\label{betti}
Given $D \in \DM(R, a)$ and an $R$-module $N$, we define
$$
\Tor^R_{\DM}(D, N) = H(F \otimes_R N), 
$$
where $F$ is a free resolution of $D$, and $F \otimes_R N$ is equipped with the differential $\del_F \otimes \id$. It follows from Lemma \ref{prop:lifting} that this definition doesn't depend on the choice of $F$. We define the \defi{Betti numbers} of $D$ to be given by
$$
\beta_j^{\DM}(D) = \dim_k \Tor^R_{\DM}(D, k)_j,
$$
where $\Tor^R_{\DM}(D, k)_j$ is the degree $j$ part of the $\Tor$ group.  Recall from Remarks \ref{remarks} that, if $D$ admits a minimal free resolution $M$, then $\beta_j^{\DM}(D)$ is the number of degree $j$ elements in any basis of $M$.

Given an ordinary $R$-module $N$, denote its Betti numbers by $\beta_{i, j}(N):= \dim_k \Tor_i(N,k)_j$. We have the following result bounding the Betti numbers of $D$ in terms of those of $H(D)$.

\begin{cor}\label{cor:semicontinuity}
Let $D \in \DM(R, a)$, and suppose $\HH(D)$ is finitely generated.  We have
\[
\beta_{j}^{\DM}(D) \leq \sum_{i=0}^\infty \beta_{i,j+ia}(\HH(D)).
\]
If $a=0$, then we also have
\[
 \beta_{j}^{\DM}(D) \equiv \sum_{i=0}^\infty \beta_{i,j}(\HH(D)) \ \ (\operatorname{mod} 2).
 \]
\end{cor}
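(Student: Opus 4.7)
The plan is to read the Betti numbers of $D$ directly off the explicit (generally non-minimal) free flag resolution provided by Theorem~\ref{thm:deformation a}, with the parity assertion coming from an elementary observation about the homology of a graded square-zero endomorphism. First, I would apply Theorem~\ref{thm:deformation a} to produce a free flag resolution $\tilde F \to D$ whose underlying module is $\bigoplus_{i \geq 0} F_i$, where $F_\bullet$ is the minimal free resolution of $\HH(D)$ twisted so that each map is homogeneous of degree $a$. Unwinding the twist---writing $F_i = F_i^{(0)}(ia)$ for $F_\bullet^{(0)}$ the standard degree-$0$ minimal free resolution of $\HH(D)$---the number of degree $j$ generators of $F_i$ is exactly $\beta_{i,\, j+ia}(\HH(D))$.

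Next, since $\Tor^R_{\DM}(D,k)$ may be computed using any free resolution (this uses Lemma~\ref{prop:lifting} via the discussion in Section~\ref{betti}), and since homology is a subquotient, I would conclude
\[
\beta_j^{\DM}(D) \;=\; \dim_k \HH(\tilde F \otimes_R k)_j \;\leq\; \dim_k (\tilde F \otimes_R k)_j \;=\; \sum_{i \geq 0} \beta_{i,\, j+ia}(\HH(D)),
\]
which is the first inequality (with the convention that the right-hand side is $+\infty$ if the sum diverges, in which case the inequality is vacuous).

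For the parity statement in the case $a=0$, the induced differential on $\tilde F \otimes_R k$ preserves the internal grading, so for each $j$ it restricts to a square-zero $k$-linear endomorphism $\del_j$ of the graded component $V_j := (\tilde F \otimes_R k)_j$; this $V_j$ is finite-dimensional because minimality of $F^{(0)}_\bullet$ forces $\beta_{i,j}(\HH(D))=0$ once $i$ exceeds $j$. Standard linear algebra then gives $\dim V_j - \dim \HH(V_j,\del_j) = 2\dim(\im \del_j)$, which is even, so the two sides of the asserted congruence differ by an even integer.

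I do not expect any genuine obstacle: the main content is already encoded in Theorem~\ref{thm:deformation a}, and both the inequality and the parity reduce to elementary facts about differential vector spaces. The one piece of bookkeeping to track carefully is how the degree-$a$ twist in Theorem~\ref{thm:deformation a} converts $\beta_{i,n}(\HH(D))$ into a count of generators in degree $n - ia$, which accounts for the shift $j \mapsto j + ia$ in the sum.
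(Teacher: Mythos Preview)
Your approach is essentially the paper's: both invoke Theorem~\ref{thm:deformation a}, identify the degree-$j$ generators of the resulting flag with $b_j := \sum_i \beta_{i,j+ia}(\HH(D))$, and then read off $\beta_j^{\DM}(D)$ as the dimension of the degree-$j$ homology of $\tilde F \otimes_R k$. The paper phrases the last step as $\beta_j^{\DM}(D) = b_j - \rank\alpha - \rank\alpha'$ for the incoming and outgoing pieces of $\overline{\del}$, which for $a=0$ collapse to the same map and give the parity; your formulation $\dim V_j - \dim \HH(V_j,\del_j) = 2\dim(\im\del_j)$ is the same identity.

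One small correction: your justification that $V_j$ is finite-dimensional, namely that minimality forces $\beta_{i,j}(\HH(D))=0$ once $i>j$, is not valid in the paper's generality, since $R_0$ is only assumed local (e.g.\ over $R=R_0=k[t]/(t^2)$ the module $k$ has $\beta_{i,0}=1$ for all $i$). The fix is simply to observe that the congruence is only meaningful when $b_j<\infty$, and in that case $V_j$ is finite-dimensional by definition; the paper leaves this implicit as well.
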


\begin{proof}
This follows in a straightforward manner from Theorem~\ref{thm:deformation a}. In more detail, let $F$ be a free flag resolution of $D$ as in Theorem~\ref{thm:deformation a}. Write $b_j:=\sum_{i=0}^\infty \beta_{i,j+ia}(\HH(D))$, so that $F=\bigoplus_{j} R(-j)^{b_j}$. By definition, $\beta_j^{\DM}(D)$ is the number of copies of $k(-j)$ in the homology of
\[
F(-a)/\mathfrak m \overset{\overline{\del}}{\to} F/\mathfrak m\overset{\overline{\del}}{\to} F(a)/\mathfrak m.
\]
Focusing on the degree $j$ part, we see that $\beta_j^{\DM}(D)$ equals the rank of the homology of
\[
k(-j+a)^{b_{j-a}}(-a) \overset{\alpha}{\to} k(-j)^{b_j}  \overset{\alpha'}{\to} k(-j-a)^{b_{j+a}}(a),
\]
where $\alpha,\alpha'$ are the restrictions of $\overline{\del}$ to the relevant summands.  Thus $\beta^{\DM}_j(D) = b_j - \rank \alpha - \rank \alpha'$.  Both statements follow.
\end{proof}

\begin{example}
Let $D$ be a degree $0$ differential module such that $\HH(D)$ has a pure resolution; that is, the minimal free resolution of $\HH(D)$ has the form $F_0\gets F_1\gets \cdots$, where each $F_i$ is generated in a different degree.  Then the differential $\del_F$ in Theorem~\ref{thm:deformation} must be minimal, and thus the Betti numbers of $D$ are the same as those of $\HH(D)$.  This begs the question: is there a Boij-S\"oderberg theory~\cite{boij-sod,es-jams,floystad} for differential modules?
\end{example}

\begin{example}
Let $D\in \DM(R,0)$.  If, for every $j\in \ZZ$, there is at most one integer $i$ such that $\beta_{i,j}(\HH(D))\ne 0$, then the Betti numbers of $D$ equal those of $\HH(D)$.  For instance, let $R=k[x_1,\dots,x_n]$, and assume that $\HH(D)=R/(f_1,\dots,f_c)$, where $f_1, \dots, f_c$ is a regular sequence such that $\deg(f_i) =2^i$ for all $1\leq i\leq c$.  We have that $\beta_j^{\DM}(D) = 1$ for all $0\leq j < 2^{c+1}$.  
\end{example}

\begin{example}
The Betti numbers of differential modules can behave quite differently as one varies the degree $a$. For instance, let $R=k[x]/(x^2)$, where $\deg(x) = 1$. For any $a\in \Z$,  we can consider the differential module $D$ given by $k\overset{0}{\longrightarrow} k(a)$ as an object in $\DM(R, a)$. This has the following Betti numbers, as $a$ varies from 0 to 2:
$$
\begin{tabular}{ l | c  }
  $a$ & $\beta_j(D)$ \\ \hline
  0 & $1 \text{ for all } j \geq 0 \text{, and } 0 \text{ for all } j<0$ \\
  1 & $\infty \text{ for $j$ = 0, and } 0 \text{ for all } j\ne 0$ \\
  2 &  $1 \text{ for all } j \leq 0, \text{ and } 0 \text{ for all } j>0$.  \\
\end{tabular}
$$
\end{example}

\begin{remark}
Assume that $R_0$ is a field and that $R$ is generated in degree 1.
We set $\reg^{\DM}(D):= \max\{ j \text{ : } \beta^{\DM}_j(D) \ne 0\}$.  We already noted in Remark~\ref{rmk:a matters} that the categories $\DM(R,a)$ can differ as one varies $a$.  Consider the following observations:
\begin{itemize}
	\item[(1)]  Every finitely generated $D\in \DM(R,0)$ satisfies $\reg^{\DM}(D)<\infty \iff R$ is regular.
	\item[(2)]  Every finitely generated $D\in \DM(R,1)$ satisfies $\reg^{\DM}(D)<\infty \iff R$ is Koszul.
\end{itemize}
The first statement is immediate from Theorem \ref{thm:deformation a}, and the second statement follows from Theorem \ref{thm:deformation a} and \cite[Theorem 1 and Corollary 3]{AP}. In particular, the properties of $\DM(R,a)$ for different $a$ seem to be related to interesting homological phenomena.
\end{remark}

\subsection{Minimal free flag resolutions}
When $D\in \DM(R,0)$, $D$ has finitely generated homology, and $R_0$ is a field, one need not choose between a minimal resolution and a flag resolution.  

\begin{prop}\label{prop:minimal and flag}
Assume $R_0$ is a field and that $D$ is an object in $\DM(R,0)$ with finitely generated homology.  Let $d$ be any integer such that the minimal free resolution $M \xra{\simeq} D$ is generated in degree $\geq d$. The resolution $M$ admits a flag structure given by the splitting $M=\bigoplus_{j\in \ZZ} M_j$, where $M_j=R(-j-d)^{\beta_{j+d}^{\DM}(D)}$.
\end{prop}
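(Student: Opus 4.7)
The strategy is a direct verification using that, because $R_0$ is a field, the maximal ideal $\m$ consists entirely of elements of strictly positive total degree.

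First, I would invoke Theorem~\ref{thm:exist}(b) to ensure that a minimal free resolution $M \xra{\simeq} D$ exists, and that any basis for $M$ as an $R$-module has only finitely many elements in each degree and none in degrees below $d$. Writing this basis as a disjoint union indexed by the degree of each basis element, this gives the decomposition $M = \bigoplus_{j \in \Z} M_j$ with $M_j = R(-j-d)^{\beta_{j+d}^{\DM}(D)}$ as $R$-modules, and $M_j = 0$ for $j < 0$. So the only content is verifying the flag condition $\del_M(M_j) \subseteq \bigoplus_{i<j} M_i$.

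Next, I would record the two key facts. Since $R_0$ is a field, its maximal ideal is zero, so $\m = R_+ = \bigoplus_{i \geq 1} R_i$; every element of $\m$ therefore has strictly positive total degree. Also, because $D \in \DM(R,0)$, the induced differential $\del_M$ is homogeneous of degree $0$, and because $M$ is a minimal free resolution, $\del_M(M) \subseteq \m M$.

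Finally, the flag condition follows from a one-line degree check. Let $e$ be a basis element contained in $M_j$, so $\deg e = j + d$. Expanding $\del_M(e) = \sum_{e'} r_{e'} e'$ in the chosen basis, minimality forces every coefficient $r_{e'}$ to lie in $\m$; for each basis element $e' \in M_i$, homogeneity of $\del_M$ then gives $\deg r_{e'} = (j+d) - (i+d) = j-i$. Since $r_{e'} \in \m = R_+$ whenever $r_{e'} \neq 0$, we get $j - i \geq 1$, i.e.\ $i < j$. Thus $\del_M(e) \in \bigoplus_{i<j} M_i$, and by $R$-linearity the same holds for all of $M_j$.

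The argument is essentially a bookkeeping exercise, so I do not expect a serious obstacle; the one point worth flagging is the use of the hypothesis that $R_0$ is a field, which is what makes $\m = R_+$ and turns minimality into a strict drop in the generator-degree index. Without that hypothesis, degree-$0$ units in $\m_{R_0}$ could move mass within $M_j$ itself and destroy the flag structure.
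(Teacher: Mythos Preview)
Your proof is correct and follows essentially the same approach as the paper: both arguments observe that minimality (together with $R_0$ a field, so $\m = R_+$) forces the degree-$0$ differential to strictly lower the generator-degree index. The paper shortcuts the bookkeeping by twisting to reduce to $d=0$, while you carry the $d$ through the degree computation explicitly, but the content is identical.
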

\begin{proof}
After replacing $D$ by $D(d)$, we can assume that $d=0$ and that $M$ is generated in degree $\geq 0$.  With this hypothesis, $M_j$ is generated entirely in degree $j$.  Since the differential on $M$ is minimal, it must send $M_j$ to $\bigoplus_{i<j} M_i$, thus yielding a flag structure.
\end{proof}
\begin{remark}\label{rmk:compute}
In the situation of Proposition~\ref{prop:minimal and flag}, the following variant of Construction~\ref{killing cycles} can be used to iteratively produce the minimal free resolution. Since $H(D)$ is finitely generated, we can twist by some $d\in \ZZ$ so that $\HH(D)$ lives in entirely in  degrees $\geq 0$.  Let $b_0:=\rank \HH(D)_0$, and let $\epsilon_0\colon R^{b_0}\to D$ be a map whose image descends to a basis of $\HH(D)_0$.  Then $\cone(\epsilon_0)$ is a finitely generated differential module whose homology lives entirely in degrees $\geq 1$.  We then define $\epsilon_1\colon R(-1)^{b_1}\to \cone(\epsilon_0)$ analogously.  The resulting free differential module is a flag by construction and minimal for degree reasons.
\end{remark}

\begin{example}
\label{DNE}
We return to Example \ref{notflag}, where $R = \QQ[x, y]$, $D = R^{\oplus 2}$, and $\del_D \colon R^{\oplus 2} \to R(2)^{\oplus 2}$ is the matrix
$$
\begin{pmatrix}
xy & -x^2 \\
y^2 & -xy\\
\end{pmatrix}.
$$
We saw in Example \ref{notflag} that $D$ does not admit the structure of a free flag. We will now show that $D$ does not even admit a free flag resolution that is minimal.  By uniqueness of minimal free resolutions, it suffices to check that $D$ is its own minimal free resolution.  There is a free flag resolution $\epsilon\colon F \xra{\simeq} D$, where $F = R^{\oplus 4}$,
$$
\del_F = \begin{pmatrix} 0 & -y & -x & -1 \\ 0 & 0 & 0 & -x \\ 0 & 0 & 0 & y \\ 0 & 0 & 0 & 0 \end{pmatrix},
\qquad
\text{ 
and
}
\qquad
\epsilon = \begin{pmatrix} x &  -1 & 0 & 0 \\ y & 0 & 1 & 0 \end{pmatrix}.
$$

\[
\text{If } \qquad
A = \begin{pmatrix} -1 & 0 & 0 & 0 \\ x & -1 & 0 & 0 \\ y & 0 & 1 & 0 \\ 0 & -y & -x & -1\end{pmatrix},
\qquad
\text{ then }
\qquad
A \del_F A^{-1} = \begin{pmatrix} 0 & 0 & 0 & -1 \\ 0 & xy&-x^2&0 \\ 0 & y^2 & -xy&0 \\ 0 & 0 & 0 & 0 \end{pmatrix}.
\]
In particular, $F$ is isomorphic to the direct sum of $D$ and a contractible differential module.  Since $\del_D$ is minimal, this implies that $D$ is its own minimal free resolution.
\end{example}

\subsection{The structure of special minimal free resolutions}

Theorem~\ref{thm:deformation} allows one to use results about minimal free resolutions of modules to prove structural results for certain special families of differential modules.  Corollary~\ref{cor:HB} provides one such example, and we now provide a brief proof of that fact.
\begin{proof}[Proof of Corollary~\ref{cor:HB}]
Theorem~\ref{thm:deformation} combined with the Hilbert-Burch Theorem~\cite[Theorem~20.15]{eisenbud} provides a flag free resolution of $D$ of the form
\begin{equation}\label{eqn:HB}
\bordermatrix{
&R&F_1&F_2\cr
R&0&\del_{1,0}&\del_{2,0}\cr
F_1&0&0&\del_{2,1}\cr
F_2&0&0&0
}.
\end{equation}
For degree reasons, the free module $F_2$ cannot have any generators of degree $0$, and thus $\del_{2,0}$ must be minimal.  Since $\del_{1,0},\del_{2,1}$ came from a minimal free resolution, this implies that the entire differential is minimal, and so $F$ is the minimal free resolution of $D$.
\end{proof}
\begin{example}
In the setup of Corollary~\ref{cor:HB}, there can be many non-isomorphic differential modules $D$ with a fixed $\HH(D)$.  For instance, let $R=k[x_1,\dots,x_n]$ be a standard graded polynomial ring over a field, and assume that $\HH(D) = R/(x_1,x_2)$.  By Theorem~\ref{thm:deformation}, every such $D\in \DM(R,0)$ has a minimal free resolution of the form
\[
\begin{pmatrix} 0 & x_1 & x_2 & q \\ 0 & 0 & 0 & -x_2 \\ 0 & 0 & 0 & x_1 \\ 0 & 0 & 0 & 0 \end{pmatrix},
\]
where $q\in R$ is some quadratic form.  By changing basis, we can eliminate any terms in $q$ involving $x_1$ or $x_2$, and we see that such modules $D$ are in bijection with quadratics in $x_3, \dots, x_n$.
\end{example}

\begin{example}
Now, suppose $D$ is as in Corollary \ref{cor:HB}, except we allow the degree of the differential on $D$ to possibly be nonzero. Again, by combining Theorem~\ref{thm:deformation} with the Hilbert-Burch Theorem~\cite[Theorem~20.15]{eisenbud}, we obtain a free flag resolution of the form \eqref{eqn:HB}.  If $F_2$ has a degree $0$ generator, which is only possible when $a>1$, then this resolution may fail to be minimal. This is precisely what happens in Example~\ref{DNE}.
\end{example}

We obtain a similar structural result when $\HH(D)$ is a Gorenstein codimension $3$ algebra.
\begin{example}\label{ex:gor codim 3}
Let $R$ be a polynomial ring over a field, and let $D\in \DM(R,0)$ such that $\HH(D)$ is a Gorenstein codimension $3$ quotient of $R$. Theorem~\ref{thm:deformation}, combined with the Buchsbaum--Eisenbud structure theorem for such algebras~\cite{buchs-eis-gor}, implies that $D$ has a flag resolution of the form
\[
\bordermatrix{
&R&F_1&F_2&R(-e)\cr
R&0&\del_{1,0}&\del_{2,0}&\del_{3,0}\cr
F_1&0&0&\del_{2,1}&\del_{3,1}\cr
F_2&0&0&0&\del_{3,2}\cr
R(-e)&0&0&0&0
},
\]
where $\del_{2,1}$ is a $(2n+1)\times (2n+1)$ skew-symmetric matrix, $\del_{1,0}$ consists of the $2n\times 2n$ principal Pfaffians of $\del_{2,1}$, and $\del_{3,2}$ is the transpose of $\del_{1,0}$.  For degree reasons, neither $F_2$ nor $F_3$ have summands of degree $0$ or $e$, and so $F$ must equal the minimal free resolution of $D$.  Many non-trivial examples of this form exist.  For instance, one can freely choose $\del_{3,0}$ and $\del_{2,0}$ and then set $\del_{3,1}$ to be the negative transpose of $\del_{2,0}$.
\end{example}

\begin{example}\label{ex:0x00}
Theorem~\ref{thm:deformation} shows that if $\HH(D)$ has finite projective dimension, then the minimal free resolution of $D$ is finitely generated.  It can also be the case that the minimal free resolution of $D$ is finite even when $\HH(D)$ has infinite projective dimension.  For example, if $R=k[x]/(x^2)$ with $\on{deg}(x) = 1$, consider $D=R(-a)\oplus R(-1)\in \DM(R,a)$ with the differential
\[
\begin{pmatrix}
0&x\\
0&0
\end{pmatrix}.
\]
The homology of $D$ is $k(-a)\oplus k(-2)$, which has infinite projective dimension.  By contrast, $D$ is the minimal free resolution of itself.
\end{example}

\bibliographystyle{amsalpha}
\bibliography{Bibliography}

\end{document}